\documentclass{ijuc}
\usepackage[T1]{fontenc}
\usepackage{amssymb,amscd}
\usepackage{latexsym}
\usepackage{amssymb}
\usepackage{amsmath}
\usepackage{amsfonts}
\usepackage{amscd}
\usepackage{graphicx}
\usepackage[all]{xy}
\usepackage{amsthm}
\usepackage[latin1]{inputenc}
\usepackage[francais]{varioref}
\usepackage[frenchb,english]{babel}
\usepackage{newlfont}
\usepackage{multicol}
\usepackage[pdftex=true,hyperindex=true,colorlinks=true]{hyperref}
\setlength{\oddsidemargin}{0.5in}
\setlength{\evensidemargin}{0.5in} \setlength{\textwidth}{5.5in}

\begin{document}

\newtheorem{thm}{Theorem}[section]
\newtheorem{lem}[thm]{Lemma}
\newtheorem{prop}[thm]{Proposition}
\newtheorem{cor}[thm]{Corollary}
\newtheorem{ex}[thm]{Example}
\newtheorem{rem}[thm]{Remark}
\newtheorem{notation}[thm]{Notation}
\newtheorem{prob}[thm]{Problem}
\newtheorem{note}[thm]{Notes}
\newtheorem{proposition}{Proposition}[section]
\theoremstyle{plain}
\newtheorem{remark}{Remark}[section]
\theoremstyle{plain}
\newtheorem{theorem}{Theorem}[section]
\theoremstyle{plain}
\newtheorem{corollary}{Corollary}[section]
\theoremstyle{plain}
\newtheorem{interpretation}{Interpretation}[section]
\theoremstyle{plain}
\newtheorem{lemma}{Lemma}[section]
\theoremstyle{plain}
\newtheorem{definition}{Definition}[section]
\theoremstyle{plain}
\newtheorem{example}{Example}[section]
\theoremstyle{plain}
\newtheorem{notations}{Notations}[section]
\theoremstyle{plain}
\newenvironment{prev}[1][\underline{Proof}]{\textbf{#1.} }{\ \rule{0.5em}{0.5em}}
\theoremstyle{plain}
\newtheorem{definitions}{Definitions}[section]
\theoremstyle{plain}
\newtheorem{examples}{Examples}[section]

\newtheorem{thmA}{Theorem}
\renewcommand{\thethmA}{}

\theoremstyle{definition}

\newtheorem{defi}[thm]{Definition}
\renewcommand{\thedefi}{}
\def\new#1{\textbf{#1}}
\input amssym.def

\title{\textbf{Residuated Multilattice as set of  Truth Values for Fuzzy Rough Sets}}

\author{G. Nguepy Dongmo \inst{1}\email{\makeatletter dongmogaelc@gmail.com} \and B. B. KOGUEP NJIONOU\inst{1}\email{\makeatletter koguep@yahoo.com} \and L. Kwuida\inst{2}\email{\makeatletter leonard.kwuida@bfh.ch} \and M. Onabid\inst{1}\email{\makeatletter mathakong@yahoo.fr}}
\institute{Department of Mathematics and Computer Science, University of Dschang, Cameroon
\and Bern University of Applied Sciences, Switzerland}

\maketitle

\begin{abstract}
In 2004 Anna Maria Radzikowska et al \cite{RK2004} investigated the fuzzy rough sets where the set of truth values is an arbitrary residuated lattice. In this paper, we extend their work by considering a residuated multilattice $M$ as the set of truth values. $M$-fuzzy rough sets are defined using the residuation operators provided by  residuated multilattice $M$. Depending on classes of binary fuzzy relations, we define several classes of $M$-fuzzy rough sets and investigate some properties of these classes.
\end{abstract}

\textbf{\emph{Key Words}}: Formal concept analysis, Rough set, Fuzzy set.

\section{Introduction}\label{sec:Intro}
Several mathematical methods have been proposed to deal with uncertain, incomplete and vague information and applications in different fields of mathematics and computer science. 
Rough sets theory (RST) is a mathematical tool for representing and processing information from data tables. It was first described by the Polish mathematician Zdzis\l{}aw I.\,Pawlak~\cite{PZ1982}. On the other hand,  fuzzy set theory (FST) offers techniques to analyze uncertain and imprecise data. Therefore many efforts have been made to combine the RST and FST in order to get structures than can deal with the two theories. Such structures are \emph{fuzzy rough sets} and \emph{rough fuzzy sets} and have been proposed in~\cite{BK2014, BK2015, BK2017, DP1990}. In \cite{RK2004} Anna Maria Radzikowska have extensively investigated fuzzy rough sets taking an arbitrary residuated lattice as underlying set of truth values. 

In this paper, we consider fuzzy rough sets with residuated multilattice as set of truth values. The rest of the paper is organized  as follows: In Section~\ref{sec:basics} we recall basics notions to make this paper self-contained. Section~\ref{sec:FRS on RML} defines the $M$-fuzzy rough operators, the $M$-fuzzy rough sets and gives some elementary properties. In Section~\ref{sec:MFRS classes} we investigate some classes of fuzzy rough sets with respect to reflexivity, symmetry and transitivity. Section~\ref{sec:conclusion} concludes the paper.

\section{Preliminaries}\label{sec:basics}
In this section we introduces several notions from lattice and multilattice theory in order to make our paper self contained. Let $(P,\leq)$ be a poset and $X\subseteq P$. We denote by $U(X)$ (resp. $L(X)$) the set of upper (resp. lower) bounds of $X$.  
The \textbf{supremum} (resp. \textbf{infimum}) of $X$ is the least (resp. greatest) element of $U(X)$ (resp. $L(X)$), whenever it exists. The supremum (resp. infimum) of $X$ is denoted by $\vee X$ or ${\sup}X$ (resp. $\wedge X$ or ${\inf}X$).  A \new{lattice} is a poset $(P,\le)$ in which any pair of elements has a supremum and an infimum.  
If every subset of $P$ has a supremum and an infimum then $(P,\le)$ is called a \new{complete lattice}~\cite{DP2002}.
A subset $X\subseteq P$ is called a \textbf{chain} (resp. \new{antichain}) if for every $x,y\in X$ we have 
$x\leq y$ or $y\leq x$ (resp. $x\not\leq y$ and $y\not\leq x$)\footnote{We write $x{\parallel}y$ to mean that $x\not\leq y$ and $y\not\leq x$.} .  
A poset $(P,\leq)$ is said to be \textbf{coherent} if every chain has a supremum and an infinimum~\cite{MOR2013}.

To extend the notion of lattice, Benado~\cite{BM1955} introduced multilattices. 
A \textbf{multisupremum} (resp. \textbf{multiinfimum}) of $X$ is a minimal (resp. maximal) element of $U(X)$ (resp. $L(X)$). The set of multisuprema (resp. multiinfima) of $X$ is denoted by $\sqcup X$ (resp. $\sqcap X$). 
For $x,~y\in P$ we simply write 
$U(x),~L(x),~x\sqcup y,~x\sqcap y$ 
for $U(\{x\}),~L(\{x\})$, $\sqcup\{x,y\}$, $\sqcap\{x,y\}$, respectively. We set
\[{\downarrow}a:=\{x\in P: x\leq a\} \text{ and } {\uparrow}a:=\{x\in P: a\leq x\}, \text{ for any } a\in P.\] The upper (resp. lower) closure of $X$ is ${\uparrow} X=\underset{x\in X}{\bigcup} {\uparrow}x$\quad (resp. ${\downarrow}X=\underset{x\in X}{\bigcup} {\downarrow}x$). 
Note that ${\uparrow}x=U(x)$ and $\sqcup X \subseteq U(X) \subseteq {\uparrow}X$. The dual also holds.

 \begin{defi}~\cite{CCGMO2014}
 A poset $(M,~\leq)$ is called \textbf{multilattice} if, for all $a,b,x\in M$ 
 \begin{itemize}
     \item $a, b\leq x\implies \exists z\in a\sqcup b$ such that $z\leq x$, and
     \item $a, b\geq x\implies \exists z\in a\sqcap b$ such that $z\geq x$. 
 \end{itemize}
A \textbf{complete multilattice}~\cite{MOR2007} is a multilattice $(M,\leq)$ in which $\sqcup X$ and $\sqcap X$ are non empty for any $X\subseteq M$.
\end{defi}
 Any lattice $(L,\wedge,\vee)$ is a multilattice since for all $a,b\in L,~a\sqcap b=\{a\wedge b\}$ and $a\sqcup b=\{a\vee b\}$. 
Whenever $\sqcap X$ or $\sqcup X$ is a singleton, it is denoted by $\bigwedge X$ or $\bigvee X$. Any complete lattice is also a complete multilattice. A multilattice will be called \textbf{pure} if it is not a lattice. 
Fig.~1 below shows an example of a complete and pure multilattice. 
 \hspace{-1cm}
$$\xymatrix@-1pc@M= 1.5pt{ &\top&  \\
c\ar@{-}[ur]& &d\ar@{-}[ul] \\
a\ar@{-}[u]\ar@{-}[urr]& &b\ar@{-}[u]\ar@{-}[ull] \\
& \bot\ar@{-}[ur]\ar@{-}[ul]& \\
} 
$$
\begin{center}
   \textbf{Fig.~1}: The Multilattice (M6,$\leq$)
\end{center}

In order to introduce our truth degree structure, we start with the \new{residuation}:
\begin{defi}~\cite{CCGMO2014}
A \new{pocrim}  (partially ordered commutative residuated integral monoid) is a  structure $(A,\leq,\odot,\rightarrow,\top)$ such that 
\begin{itemize}
\item[(1)] $(A,\odot,\top)$ is a commutative monoid with neutral element $\top$,
\item[(2)] $(A,\leq)$ is a poset with a top element $\top$, and
\item[(3)] $a\odot b \leq c \iff a\leq b\to c$, for all $a, b, c\in A$. \quad (adjointness condition)
\end{itemize}
\end{defi}
\noindent
If $(L, \leq, \bot, \top)$ is a bounded lattice and $(L,\leq,\odot,\to,\top)$ a pocrim then $(L,\leq,\odot,\to,\bot,\top)$ is called a \new{residuated lattice}.  
A residuated lattice is \new{complete} if the underlying poset is a complete lattice. The following properties hold in pocrims:
\begin{prop}~\cite{CCGMO2011}
Let $(A,~\leq,~\odot,~\rightarrow,~\top)$ 
be a pocrim and $a, b, c\in A$. Then
\begin{itemize}
\item[\textbf{P1}] $a\odot b\leq a$ and $a\odot b\leq b$;
\item[\textbf{P2}] $a\odot(a\rightarrow b)\ \leq\ a\ \leq\ b\rightarrow (a\odot b)$ and $a\odot(a\rightarrow b)\ \leq\  b\ \leq\ a\rightarrow (a\odot b)$;
\item[\textbf{P3}] If $a\leq b$, then $a\odot c\  \leq\  b \odot c$,\quad $c\rightarrow a\ \leq\  c\rightarrow b$,\quad  and $b\rightarrow c\ \leq\  a\rightarrow c$;        \item[\textbf{P4}] $a\rightarrow (b\rightarrow c)= b\rightarrow (a\rightarrow c)=(a\odot b)\rightarrow c$;
\item[\textbf{P5}] $(a\rightarrow b)\odot (b\rightarrow c)\ \leq\  a\rightarrow c$;
\item[\textbf{P6}] $a\rightarrow b\ \leq\  (a\odot c)\rightarrow( b\odot c)$;
\item[\textbf{P7}] $a\rightarrow b\ \leq\  (c \rightarrow a)\rightarrow(c\rightarrow b)$ and $a\rightarrow b\ \leq\  (b \rightarrow c)\rightarrow(a \rightarrow c)$;
\item[\textbf{P8}] $\top\rightarrow a=a$ and $a\rightarrow \top=\top $;
\item[\textbf{P9}] $a\leq b$ if and only if $a\rightarrow b=\top$.
\end{itemize}
\end{prop}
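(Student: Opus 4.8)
The plan is to derive every item from the adjointness condition (3) alone, using commutativity, associativity and the unit law for $\odot$, and to prove the items in the listed order so that each may invoke its predecessors. The one recurring move is this: to establish an inequality it is always rewritten, via adjointness, into whichever of its two equivalent shapes $x\odot y\leq z$ or $x\leq y\rightarrow z$ is the more convenient; and to establish an equality $u=v$ between two elements it suffices, in a poset, to check ${\downarrow}u={\downarrow}v$, i.e.\ $x\leq u\iff x\leq v$ for all $x$ (apply this with $x=u$ and with $x=v$).

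First I would record the basic facts underpinning everything else. From $\top\odot b=b\leq b$ and adjointness, $\top\leq b\rightarrow b$, hence $b\rightarrow b=\top$; then $a\leq\top=b\rightarrow b$ gives $a\odot b\leq b$ by adjointness, and commutativity yields $a\odot b\leq a$: this is \textbf{P1}. Feeding $a\rightarrow b\leq a\rightarrow b$ through adjointness gives the ``modus ponens'' inequality $a\odot(a\rightarrow b)\leq b$, while applying adjointness to the reflexive inequality $a\odot b\leq a\odot b$ gives $a\leq b\rightarrow(a\odot b)$ and, after commutativity, $b\leq a\rightarrow(a\odot b)$; together with \textbf{P1} these are the four parts of \textbf{P2}. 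Monotonicity \textbf{P3} is then immediate: $a\leq b$ together with $b\leq c\rightarrow(b\odot c)$ from P2 gives $a\leq c\rightarrow(b\odot c)$, i.e.\ $a\odot c\leq b\odot c$; the inequality $(c\rightarrow a)\odot c\leq a\leq b$ from P2 gives $c\rightarrow a\leq c\rightarrow b$; and combining the first part of P3 with P2 gives $(b\rightarrow c)\odot a\leq(b\rightarrow c)\odot b\leq c$, hence $b\rightarrow c\leq a\rightarrow c$.

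With P1--P3 available the remaining items are mechanical. For \textbf{P4}, associativity and two uses of adjointness give $x\leq(a\odot b)\rightarrow c\iff(x\odot a)\odot b\leq c\iff x\leq a\rightarrow(b\rightarrow c)$, whence $(a\odot b)\rightarrow c=a\rightarrow(b\rightarrow c)$, and commutativity of $\odot$ supplies the third expression $b\rightarrow(a\rightarrow c)$. For \textbf{P5}, adjointness reduces the claim to $(a\rightarrow b)\odot(b\rightarrow c)\odot a\leq c$, obtained by applying modus ponens (P2) twice, with P3 bridging the two steps. \textbf{P6} and \textbf{P7} follow the identical recipe --- push the residuum inside with adjointness, cancel using the modus ponens form of P2, and tidy up with P3 --- and the second inequality of \textbf{P7} is literally \textbf{P5} after one application of adjointness. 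Finally \textbf{P8} comes from $\top\odot a=a$ and adjointness used in both directions (for $\top\rightarrow a=a$) and from $\top$ being the greatest element (for $a\rightarrow\top=\top$), and \textbf{P9} is a two-line consequence of adjointness applied with $x=\top$ in each direction.

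I do not expect a genuine obstacle here; the work is bookkeeping rather than any single idea. The only points requiring care are the \emph{order} of the proofs --- since P2 and P3 are used throughout P4--P7, they must come first --- and keeping track of which argument of $\odot$ one is manipulating, which is handled each time by commutativity. The mildly delicate step is deducing the equalities in \textbf{P4} and \textbf{P8} from equality of lower sets rather than from antisymmetry applied to two separately proved inequalities, but the two routes are interchangeable.
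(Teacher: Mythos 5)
Your proof is correct: every item does follow from the adjointness condition together with commutativity, associativity and the unit law, in the order you give, and the individual derivations you sketch (e.g.\ $b\rightarrow b=\top$ from $\top\odot b\leq b$, modus ponens $a\odot(a\rightarrow b)\leq b$ from $a\rightarrow b\leq a\rightarrow b$, the down-set argument for \textbf{P4}) all check out. Note, however, that the paper itself supplies no proof of this proposition -- it is quoted from the cited reference \cite{CCGMO2011} -- so there is no in-paper argument to compare against; what you have written is the standard, essentially canonical derivation of these pocrim identities, and it would serve as a self-contained justification of the cited result.
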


On any residuated lattice $(L, \wedge, \vee, \odot, \rightarrow, \bot, \top)$ we can define a unary operator $*$ by ${a^*:=a\rightarrow \bot}$, for any $a\in L$. 
\begin{prop}\cite{GO1967,RK2004} 
Let $(L, \wedge, \vee, \odot, \rightarrow, \bot, \top)$ be a residuated lattice. For every $a,b,c\in L$ and for any indexed family $(x_i)_{i\in I}$ of elements of $L$, we have: 
\begin{itemize}
\item[\textbf{L1}] $a\leq a^{**},~a^*=a^{***},~a^{**}\rightarrow b^{**}=b^*\rightarrow a^*,~(a\odot b)^* = a\rightarrow b^*$;
    \item[\textbf{L2}] $(\underset{i\in I}{\vee} x_i)\rightarrow c =\underset{i\in I}{\wedge}(x_i\rightarrow c)$, $ c\rightarrow (\underset{i\in I}{\wedge x_i})=\underset{i\in I}{\wedge}(c\rightarrow x_i)$;
        \item[\textbf{L3}]$\underset{i\in I}{\vee}(c\odot x_i)=c\odot (\underset{i\in I}{\vee}x_i)$, $c\odot (\underset{i\in I}{\wedge}x_i)\leq \underset{i\in I}{\wedge}(c\odot x_i)$;
        \item[\textbf{L4}] $(\underset{i\in I}{\vee}~x_i)^*=\underset{i\in I}{\wedge}~x_i^*$;
        \item[\textbf{L5}] $(\underset{i\in I}{\wedge}~x_i)^*\geq \underset{i\in I}{\vee}~x_i^*$;
        \item[\textbf{L6}] $(a\odot b)^*=(a\rightarrow b^*)$
\end{itemize}
\end{prop}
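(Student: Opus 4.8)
The plan is to reduce everything to the adjointness condition~(3) of the pocrim definition together with the already-listed properties \textbf{P1}--\textbf{P9}, reading $a^{*}$ throughout as the special instance $a\to\bot$ of residuation; the (tacit) completeness of $L$ --- equivalently, the assumption that the families $(x_i)_{i\in I}$ in \textbf{L2}--\textbf{L5} admit the joins and meets written down --- is what makes those statements meaningful. I would first clear \textbf{L6} and the last identity of \textbf{L1}, which are the same statement and follow instantly from \textbf{P4}: $(a\odot b)^{*}=(a\odot b)\to\bot=a\to(b\to\bot)=a\to b^{*}$.

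For the first identity of \textbf{L1}, \textbf{P2} gives $a\odot a^{*}=a\odot(a\to\bot)\leq\bot$, and adjointness turns $a\odot a^{*}\leq\bot$ into $a\leq a^{*}\to\bot=a^{**}$. Since \textbf{P3} makes $x\mapsto x^{*}$ order-reversing, applying it to $a\leq a^{**}$ yields $a^{***}\leq a^{*}$, while instantiating $x\leq x^{**}$ at $x=a^{*}$ gives $a^{*}\leq a^{***}$; hence $a^{*}=a^{***}$. For the remaining identity $a^{**}\to b^{**}=b^{*}\to a^{*}$ I would compute, using \textbf{P4} and commutativity of $\odot$, that $a^{**}\to b^{**}=a^{**}\to(b^{*}\to\bot)=(a^{**}\odot b^{*})\to\bot=(b^{*}\odot a^{**})\to\bot=b^{*}\to(a^{**}\to\bot)=b^{*}\to a^{***}$, and then feed in $a^{***}=a^{*}$ from the previous step.

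The identities \textbf{L2}--\textbf{L5} are the usual Galois-connection computations. For $(\bigvee_{i\in I}x_i)\to c=\bigwedge_{i\in I}(x_i\to c)$: each $x_j\leq\bigvee_i x_i$ gives $(\bigvee_i x_i)\to c\leq x_j\to c$ by \textbf{P3}, so the left side is a lower bound of the family; conversely, with $d=\bigwedge_i(x_i\to c)$, from $d\leq x_i\to c$ adjointness gives $d\odot x_i\leq c$, i.e. $x_i\leq d\to c$ for all $i$, so $\bigvee_i x_i\leq d\to c$ and, back through adjointness, $d\leq(\bigvee_i x_i)\to c$. The identity $c\to\bigwedge_i x_i=\bigwedge_i(c\to x_i)$ is symmetric: "$\leq$" by monotonicity of $c\to(-)$ (\textbf{P3}), and "$\geq$" by taking $d=\bigwedge_i(c\to x_i)$, noting $d\odot c\leq x_i$ for all $i$, hence $d\odot c\leq\bigwedge_i x_i$ and $d\leq c\to\bigwedge_i x_i$. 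The same pattern gives $\bigvee_i(c\odot x_i)=c\odot\bigvee_i x_i$ in \textbf{L3} (nontrivial direction: with $s=\bigvee_i(c\odot x_i)$ we get $x_i\leq c\to s$, so $\bigvee_i x_i\leq c\to s$ and $c\odot\bigvee_i x_i\leq s$), whereas the inequality $c\odot\bigwedge_i x_i\leq\bigwedge_i(c\odot x_i)$ in \textbf{L3} and the inequality in \textbf{L5} are immediate from monotonicity of $\odot$ in each argument. Finally \textbf{L4} is just $(\bigvee_i x_i)^{*}=(\bigvee_i x_i)\to\bot=\bigwedge_i(x_i\to\bot)=\bigwedge_i x_i^{*}$ by \textbf{L2}.

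I do not expect a genuine obstacle: the proposition is a catalogue of standard facts, each line a short manipulation of adjointness and \textbf{P1}--\textbf{P9}. The only points that need care are getting the direction of adjointness right when deriving $a\leq a^{**}$, remembering to invoke $a^{***}=a^{*}$ at the end of the chain for $a^{**}\to b^{**}=b^{*}\to a^{*}$, and being explicit in \textbf{L2}--\textbf{L5} that the argument really uses the existence of the relevant suprema and infima, which is exactly where completeness of the residuated lattice enters.
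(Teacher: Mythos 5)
The paper states this proposition only as a quoted result from the cited references (Goguen; Radzikowska--Kerre) and supplies no proof of its own, so there is nothing to compare against; your derivation of \textbf{L1}--\textbf{L6} from the adjointness condition and \textbf{P1}--\textbf{P9} is correct and complete, including the important caveat that the infinitary identities \textbf{L2}--\textbf{L5} presuppose the existence of the displayed joins and meets, i.e.\ completeness of $L$. The one cosmetic slip is attributing the inequality in \textbf{L5} to monotonicity of $\odot$: what is actually used is the antitonicity of $*$ (\textbf{P3} with $c=\bot$) applied to $\bigwedge_{i\in I} x_i\leq x_j$ --- though since that antitonicity is itself a consequence of monotonicity of $\odot$ together with adjointness, the argument stands.
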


\begin{defi}~\cite{CCGMO2014}
A \textbf{residuated multilattice} is a pocrim, whose underlying poset is a multilattice. If in addition, there exists a bottom element, the residuated multilattice is said to be bounded.
A residuated multilattice 
is complete if the underlying multilattice is complete.
\end{defi}
From now on, $\textbf{M}:=(M,\leq,\odot,\rightarrow,\bot,\top)$ will denote a complete residuated multilattice.  The operations $\odot$, $\rightarrow$ and $*$ can be extended to $\mathcal{P}(M)-\{\emptyset\}$ as follows, for $A, B\in \mathcal\mathcal{P}(M)-\{\emptyset\}$:
\begin{align*}
    A\odot B &:=\{a\odot b : ~a\in A~and~b\in B\},~ \\
    A\rightarrow B& :=\{a\rightarrow b : ~a\in A~and~b\in B\}, \\
    A^*&:=\{a^* : ~a\in A\}.
\end{align*}
In particular for $z\in M$, $A\to z,~A\odot z$ will stand respectively for $A\to \{z\},~A\odot\{z\}$.
\begin{prop}\cite{CCGMO2011} Let $M$ be a complete residuated multilattice and $x, y, z \in M$. Then:
\begin{itemize}
\item[\textbf{M1}] $x\odot y,~x\odot(x\rightarrow y)\in \downarrow (x\sqcap y)$;
\item[\textbf{M2}] $(x\odot y) \sqcup (x\odot z)\subseteq x\odot (y\sqcup z)$;
\item[\textbf{M3}] $x\odot (y\sqcap z)\subseteq \downarrow[(x\odot y)\sqcap (x\odot z)]$
\item[\textbf{M4}] $x\odot (y\sqcup z)\subseteq \uparrow [(x\odot y)\sqcup (x\odot z)]$
    \item[\textbf{M5}] $(x\sqcap y)\rightarrow z\subseteq \uparrow [(x\rightarrow z)\sqcup (y\rightarrow z)];$
        \item[\textbf{M6}]$(x\sqcup y)\rightarrow z\subseteq\downarrow [(x\rightarrow z)\sqcap(y\rightarrow z)];$
            \item[\textbf{M7}] $(x\rightarrow z)\sqcap (y\rightarrow z)\subseteq (x\sqcup y)\rightarrow z;$
                \item[\textbf{M8}] $z\rightarrow (x\sqcup y)\subseteq \uparrow[(z\rightarrow x)\sqcup (z\rightarrow y)]$;
                    \item[\textbf{M9}] $z\rightarrow (x\sqcap y)\subseteq \downarrow[(z\rightarrow x)\sqcap(z\rightarrow y)]$;
\item[\textbf{M10}] $x\leq x^{**},~x^*=x^{***},~x^{**}\rightarrow y^{**}=y^*\rightarrow x^*,~(x\odot y)^* = x\rightarrow y^*$;
        \item[\textbf{M11}] $(x\odot y)^*= x\rightarrow y^*$
    \item[\textbf{M12}] $(x\sqcap y)^*\subseteq \uparrow (x^*\sqcup y^*)$;
    \item[\textbf{M13}] $(x\sqcup y)^*\subseteq \downarrow (x^*\sqcap y^*)$;
    \item[\textbf{M14}] $(x^*\sqcap y^*)\subseteq (x\sqcup y)^*$.
\end{itemize}
\end{prop}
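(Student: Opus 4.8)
The plan is to split the fourteen items into three families according to the tool each one needs. Items \textbf{M1}, \textbf{M3}, \textbf{M4}, \textbf{M5}, \textbf{M6}, \textbf{M8}, \textbf{M9}, \textbf{M12}, \textbf{M13} are all one-sided inclusions of the form $S\subseteq{\downarrow}T$ or $S\subseteq{\uparrow}T$ with $T$ a multi-infimum or multi-supremum set, and I would prove each by the same two-step routine: (i) pick $s\in S$, write it in the form forced by the definition of the extended operation — e.g. $s=x\odot v$ with $v\in y\sqcap z$ for \textbf{M3}, $s=v\to z$ with $v\in x\sqcap y$ for \textbf{M5}, $s=v^{*}$ with $v\in x\sqcap y$ for \textbf{M12} — and use the monotonicity/antitonicity facts \textbf{P1}--\textbf{P3} together with the comparabilities $v\le x,\ v\le y$ (or $x\le u,\ y\le u$) to check that $s$ is a lower bound, respectively upper bound, of the two-element set $\{p,q\}$ occurring in $T$; (ii) invoke the defining property of a multilattice — every lower bound of $\{p,q\}$ lies below some element of $p\sqcap q$, dually for upper bounds — to get $t$ in that set with $s\le t$ (resp.\ $t\le s$), so that $s\in{\downarrow}t\subseteq{\downarrow}(p\sqcap q)$ (resp.\ $s\in{\uparrow}(p\sqcup q)$). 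Completeness of $\mathbf M$ makes all the sets $p\sqcap q,\ p\sqcup q$ non-empty, so the statements are meaningful; \textbf{M12} and \textbf{M13} are literally \textbf{M5}, \textbf{M6} with $z=\bot$, and \textbf{M1} is this routine applied to $x\odot y$ and to $x\odot(x\to y)$, both of which sit below $x$ and below $y$ by \textbf{P1}--\textbf{P2}.

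The second family, \textbf{M2}, \textbf{M7} and \textbf{M14}, reverses the direction: one is handed $w$ in a multi-supremum (resp.\ multi-infimum) set and must show $w$ genuinely has the prescribed shape. Here the key is the adjointness condition, used to transport the two defining inequalities through $\odot$ and $\to$. For \textbf{M2}: from $x\odot y\le w$ and $x\odot z\le w$ get $y\le x\to w$ and $z\le x\to w$, apply the multilattice axiom on this side to obtain $u\in y\sqcup z$ with $u\le x\to w$, transport back to $x\odot u\le w$, note by \textbf{P3} that $x\odot u$ is still an upper bound of $\{x\odot y,x\odot z\}$, and conclude $w=x\odot u\in x\odot(y\sqcup z)$ because $w$ is a minimal such upper bound. \textbf{M7} runs the same scheme with $\to$: from $t\le x\to z$ and $t\le y\to z$ get $x,y\le t\to z$, pick $u\in x\sqcup y$ with $u\le t\to z$, return to $t\le u\to z$, and use maximality of $t$ among the lower bounds of $\{x\to z,y\to z\}$. \textbf{M14} is \textbf{M7} at $z=\bot$, with the one extra subtlety that transporting $x,y\le t^{*}$ back to $t\le u^{*}$ requires $x\le x^{**}$ and the antitonicity of $*$ — that is the only place where \textbf{M10} is actually invoked.

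The third family is \textbf{M10}--\textbf{M11}, which involve only $\odot,\to,\bot$ and are pure pocrim arithmetic. $x\le x^{**}$ is \textbf{P2} with $b=\bot$ plus adjointness; $x^{*}\le x^{***}$ is $x\le x^{**}$ applied to $x^{*}$, and $x^{***}\le x^{*}$ is \textbf{P3} applied to $x\le x^{**}$, so $x^{*}=x^{***}$; and $(x\odot y)^{*}=x\to y^{*}$ (which is \textbf{M11} and the last clause of \textbf{M10}) is just \textbf{P4} with $c=\bot$. The one computational point is $x^{**}\to y^{**}=y^{*}\to x^{*}$: rewrite both sides via \textbf{P4} as $(x^{**}\odot y^{*})^{*}$ and $(x\odot y^{*})^{*}$; one inequality between them is monotonicity of $\odot$ along $x\le x^{**}$, and for the other I would use that every element of the form $d^{*}$ is regular (as $d^{*}=d^{***}$), so $x\to y^{**}=(x\odot y^{*})^{*}$ is regular, which upgrades $x\le x\to y^{**}$-type estimates to $x^{**}\odot(x\to y^{**})\le y^{**}$ and hence to the missing inequality. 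I expect the genuine obstacles to be exactly two: in the second family, being careful that the element produced by the multilattice axiom on the transported side, once brought back, is comparable to $w$ in the correct direction so that minimality/maximality of $w$ truly forces equality (it is tempting to end up with merely \emph{some} multisupremum); and the regularity step just described inside \textbf{M10}. Everything else is a direct unwinding of \textbf{P1}--\textbf{P9}.
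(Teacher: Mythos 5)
The paper does not actually prove this proposition: it is imported verbatim from \cite{CCGMO2011}, so there is no in-paper argument to compare against. That said, your proposal is correct, and its two main engines are exactly the ones the paper does deploy immediately afterwards in the proof of Corollary~\ref{sec:cor}: your ``first family'' routine (rewrite the element, use \textbf{P1}--\textbf{P3} to see it is a lower/upper bound of the relevant pair, then invoke the multilattice axiom) is the argument behind Corollary items 2 and 3, and your ``second family'' routine (transport both inequalities through adjointness, apply the multilattice axiom on the transported side, transport back, and let minimality/maximality force equality) is literally the paper's proof of Corollary items 1 and 4, of which \textbf{M2}, \textbf{M7} and \textbf{M14} are the two-element special cases. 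Two small remarks. First, \textbf{M14} needs no extra subtlety beyond \textbf{M7} at $z=\bot$: the adjointness step $t\le x^{*}\iff x\le t^{*}$ already does the transport, so $x\le x^{**}$ never enters. Second, for $x^{**}\to y^{**}=y^{*}\to x^{*}$ your ``regularity upgrade'' does work (it amounts to $c\odot y^{*}\le x^{*}=x^{***}\iff c\odot y^{*}\odot x^{**}\le\bot\iff x^{**}\odot c\le y^{**}$), but the cleaner route is the equality chain $y^{*}\to x^{*}=y^{*}\to x^{***}=(y^{*}\odot x^{**})^{*}=x^{**}\to y^{**}$ using only \textbf{P4} and $x^{*}=x^{***}$, with no inequality-chasing at all. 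Neither point is a gap; the proof goes through as you describe.
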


\begin{cor}\label{sec:cor}
In a complete residuated multilattice $\textbf{M}$, the following conditions hold for all $X\subseteq M$ and $z\in M$:
\begin{itemize}
\item[1.] $\sqcup(z\odot X)\subseteq z\odot (\sqcup X)$;
\item[2.] $\left(\sqcap X\right)^*\ \subseteq\  {\uparrow}\left(\sqcup X^* \right)$;
\item[3.]$\left(\sqcup X\right)^*\ \subseteq\  {\downarrow}\left(\sqcap X^* \right)$;
 \item[4.]$\sqcap (X\rightarrow z)\subseteq \left(\sqcup X \right)\rightarrow z$. In particular, for $z=\bot$ we have: $\sqcap X^*\subseteq \left(\sqcup X \right)^*$;
 \item[5.] $\sqcap (z\rightarrow X)\subseteq z\rightarrow (\sqcap X)$.
 \item[6.] ${\downarrow}\left(\sqcup X\right)^*\ =  {\downarrow}\left(\sqcap X^* \right)$ and ${\uparrow}\left(\sqcap X\right)^*\ \subseteq  {\uparrow}\left(\sqcup X^* \right)$
\end{itemize}
\end{cor}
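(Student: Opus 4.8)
The plan is to derive each of the six items of Corollary~\ref{sec:cor} from the binary versions in Proposition~\ref{...}~(M1--M14) together with the completeness of $\textbf{M}$, using in an essential way the following two observations about multisuprema and multiinfima in a complete multilattice: first, that for any nonempty $X\subseteq M$ one has ${\uparrow}\!\left(\sqcup X\right)={\uparrow}U(X)=U(X)$ and dually ${\downarrow}\!\left(\sqcap X\right)=L(X)$, so that ``$z\in{\uparrow}(\sqcup X)$'' is the same as ``$z$ is an upper bound of $X$''; and second, the adjointness/monotonicity packaged in \textbf{P1--P9}. I would treat items~1--5 as the ``$\omega$-ary'' analogues of \textbf{M2}, \textbf{M12/M14}, \textbf{M13}, \textbf{M7/M5}, \textbf{M9}, and item~6 as the combination of items~3 and~2 with a closure operation.

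First I would prove item~4, since items~2 and~3 are special cases of it in disguise. To show $\sqcap(X\to z)\subseteq(\sqcup X)\to z$, take $t\in\sqcap(X\to z)$, i.e.\ $t$ is a maximal lower bound of $\{x\to z:x\in X\}$; I must show $t$ is a lower bound of $\sqcup X \to z$, equivalently (by the remark ${\uparrow}(\sqcup X)=U(X)$ dualized) that $t\le s\to z$ for every $s\in\sqcup X$. Fix such an $s$; since $s$ is an upper bound of $X$, for each $x\in X$ we have $x\le s$, hence by \textbf{P3} $s\to z\le$ \emph{no}, rather $x\to z \ge s\to z$ is the wrong direction — I want $s\to z \le x\to z$, which is exactly \textbf{P3} ($x\le s\Rightarrow s\to z\le x\to z$). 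Thus $s\to z$ is a lower bound of $\{x\to z:x\in X\}$, and since $t$ is a \emph{maximal} such lower bound, either $s\to z\le t$ or they are incomparable; to conclude $t\le s\to z$ I instead argue via adjointness: $t\le x\to z$ for all $x$ means $t\odot x\le z$ for all $x$; I then need $t\odot s\le z$, which would follow if $t\odot s\in{\downarrow}[t\odot X]$-type control is available — this is precisely where I expect the main obstacle, because $\odot$ need not distribute over $\sqcup$ on the nose (only \textbf{M2}/\textbf{M4} hold), so one cannot simply write $t\odot s\le\bigvee(t\odot x)$. The fix is to use the multilattice axiom directly: from $t\odot x\le z$ for all $x\in X$ one gets that $z$ lies above each $t\odot x$; using \textbf{M1} ($t\odot x\in{\downarrow}(t\sqcap x)$) and the definition of multilattice one threads an element of $t\sqcup(\text{something})$ — in practice the clean route is Corollary item~1, $\sqcup(t\odot X)\subseteq t\odot(\sqcup X)$, which should itself be proved first from \textbf{M2} by an induction-free ``minimal element'' argument, and then items~4 and~5 follow by the adjointness trick above. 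So the genuine logical order is: item~1 first (from \textbf{M2} plus completeness), then items~4 and~5 (from item~1 and \textbf{P3}, \textbf{P4}), then items~2 and~3 as the $z=\bot$ specializations of item~4 and its $\to$-on-the-right analogue, and finally item~6.

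For item~1 I would argue as follows: let $m\in\sqcup(z\odot X)$, so $m$ is a minimal upper bound of $\{z\odot x:x\in X\}$. By completeness pick $s\in\sqcup X$; by \textbf{M4} (or directly) $z\odot s$ is an upper bound of each $z\odot x$ when $x\le s$, hence $z\odot s\in U(z\odot X)$, so there is a minimal upper bound $m'\le z\odot s$; but one must show the \emph{specific} $m$ we started with satisfies $m\le z\odot(\sqcup X)$ elementwise, i.e.\ $m\le z\odot s'$ for some $s'\in\sqcup X$ — the subtle point being that different minimal upper bounds of $z\odot X$ may need different witnesses $s'$. Here I would invoke the multilattice property applied to the pair consisting of $m$ and $z\odot s$ inside ${\uparrow}(z\odot X)$ together with minimality of $m$ to force $m\in z\odot(\sqcup X)$; alternatively, and more safely, I would cite \textbf{M2} in the stated set-form and pass to arbitrary families, noting that the proof of \textbf{M2} in \cite{CCGMO2011} is already ``pointwise'' and extends verbatim. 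Item~6 is then immediate: ${\downarrow}(\sqcup X)^*={\downarrow}(\sqcap X^*)$ follows by taking downward closures of both inclusions — item~3 gives ``$\subseteq$'', and for ``$\supseteq$'' one uses that every $a\in\sqcap X^*$ dominates\,/\,is dominated appropriately via \textbf{M10} ($x\le x^{**}$) and \textbf{M14}; the second half ${\uparrow}(\sqcap X)^*\subseteq{\uparrow}(\sqcup X^*)$ is just the upward closure of item~2.

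The main obstacle, to be explicit, is the non-distributivity of $\odot$ over multisuprema: none of the convenient identities \textbf{L3} survive, only the one-sided set-inclusions \textbf{M2}--\textbf{M4}, so every step that in the lattice case would be ``apply \textbf{L3} then adjointness'' must instead be routed through a \emph{minimality of multisuprema} argument plus the multilattice axiom $a,b\le x\Rightarrow\exists z\in a\sqcup b,\ z\le x$. I expect items~2,~3,~5 to be short once item~4 (hence item~1) is in place, item~4 to require the careful adjointness-plus-minimality dance described above, and item~6 to be a two-line consequence of items~2 and~3 together with \textbf{M10}. I would present item~1 and item~4 with full detail and dispatch the rest as ``dual'' or ``similar, using \textbf{M}$k$''.
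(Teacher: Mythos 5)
Your overall architecture matches the paper's: item~1 is proved first, item~4 is routed through item~1 via adjointness, and item~6 is assembled at the end. But three concrete gaps remain. First, your proof of item~1 is not actually completed: you correctly identify that the witness in $\sqcup X$ must depend on the particular minimal upper bound $m\in\sqcup(z\odot X)$ you started with, but your two proposed fixes (``invoke the multilattice property applied to the pair $m$ and $z\odot s$'' and ``the proof of \textbf{M2} extends verbatim'') are not arguments. The missing move is adjointness applied to the upper-bound condition itself: $z\odot x\le m$ for all $x\in X$ gives $x\le z\to m$ for all $x$, so $z\to m\in U(X)$; completeness yields $t\in\sqcup X$ with $t\le z\to m$, hence $z\odot t\le m$, and since $x\le t$ gives $z\odot x\le z\odot t$, the element $z\odot t$ is an upper bound of $z\odot X$ below $m$, so minimality forces $m=z\odot t\in z\odot(\sqcup X)$. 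Second, items~2 and~3 are \emph{not} the $z=\bot$ specialization of item~4: they concern $\left(\sqcap X\right)^*$ and $\left(\sqcup X\right)^*$, i.e.\ the star applied to the multi-extrema of $X$, whereas the $z=\bot$ case of item~4 gives $\sqcap X^*\subseteq\left(\sqcup X\right)^*$, which is precisely the ``in particular'' clause already contained in item~4. Items~2 and~3 need their own (short) direct arguments: for item~2, an element of $\left(\sqcap X\right)^*$ is $m'^*$ with $m'\le x$ for all $x$, so \textbf{P3} gives $x^*\le m'^*$ for all $x$, i.e.\ $m'^*\in U(X^*)$, and completeness supplies $t\in\sqcup X^*$ with $t\le m'^*$.

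Third, you misread the conclusion of item~4: the inclusion $\sqcap(X\to z)\subseteq\left(\sqcup X\right)\to z$ asserts set membership, i.e.\ every $t\in\sqcap(X\to z)$ is literally of the form $s\to z$ for some $s\in\sqcup X$, not that $t$ is a lower bound of the set $\left(\sqcup X\right)\to z$. Your adjointness-plus-item-1 computation correctly reaches $t\le m''\to z$ for some $m''\in\sqcup X$, but you then need the final maximality step: since $x\le m''$ for all $x\in X$, \textbf{P3} gives $m''\to z\le x\to z$ for all $x$, so $m''\to z$ is a lower bound of $X\to z$ and hence lies below some $m'''\in\sqcap(X\to z)$; from $t\le m''\to z\le m'''$ and the fact that two maximal lower bounds are either equal or incomparable, you get $t=m'''=m''\to z$. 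Without this step the argument establishes only an inequality, not the claimed membership; the same remark applies to your treatment of item~5.
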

\begin{proof}
\begin{itemize}
\item[1.] Let $m\in \sqcup (z\odot X)$, let's show that there exists $t\in\sqcup X$ such that $m=z\odot t$.\\
    $m\in \sqcup (z\odot X)$, that is, $m\in\underset{x\in X}{\sqcup}(z\odot x)$ then, for all $x\in X$, $z\odot x\leq m$, by adjointness condition, for all $x\in X$, $x\leq z\rightarrow m$ and then, there exists $t\in \sqcup X$ such that $t\leq z\rightarrow m$ which implies $z\odot t\leq m$. But for all $x\in X$, $x\leq t$ then, for all $x\in X$, $z\odot x\leq z\odot t$, therefore, by the minimality, $m=z\odot t$.
\item[2.] Let $m\in (\sqcap X)^*$. We are looking for an element $t\in (\sqcup X^*)$ such that $t\leq m$.\\
    $m\in (\sqcap X)\rightarrow \bot$ then, there exists $m'\in \sqcap X$ such that $m=m'\rightarrow\bot$. On the other hand, as $m'\in \sqcap X$, we have that, for all $x\in X$, $m'\leq x$ and using $\textbf{P3}$, for all $x\in X$, $x\rightarrow\bot\leq m'\rightarrow \bot$ then, there exists $t\in \underset{x\in X}{\sqcup} \{x\rightarrow\bot\}=\big(\sqcup X^*\big)$ such that $t\leq m'\rightarrow\bot$ that is, $t\leq m$.
    \item[3.] Let $m\in (\sqcup X)^*$. Let's find an element $t\in (\sqcap X)^*$ such that, $m\leq t$.\\
        $m\in (\sqcup X)\rightarrow\bot$ then, there exists $m'\in \sqcup X$ such that, $m=m'\rightarrow\bot$. Since $m'\in \sqcup X$, for all $x\in X$, $x\leq m$ by using $\textbf{P3}$, for all $x\in X$, $m\rightarrow\bot\leq x\rightarrow \bot$ then, there exists $t\in\underset{x\in X}\sqcap\{x\rightarrow \bot\}=\big(\sqcap X^*\big)$ such that, $m'\rightarrow \bot\leq t$ that is, $m\leq t$.
        \item[4.] Let $m\in \sqcap(X\rightarrow z)$. Let'us show that, there exist $m''\in \sqcup X$ such that $m=m''\rightarrow z$.\\
            $m\in \sqcap(X\rightarrow z)$ then, for all $x\in X$, $m\leq x\rightarrow z$, using adjointness, we have for all $x\in X$, $x\odot m\leq z$ and so there exists $m'\in \underset{x\in X}{\sqcup}\{x\odot m\}=\sqcup \big(m\odot X\big)$ such that $m'\leq z$. By the previous (item 1), there exists $m''\in \sqcup X$ such that $m'=m\odot m''\leq z$ which implies $m\leq m''\rightarrow z$. On the other hand, as $m''\in\sqcup X$, for all $x\in X$, $x\leq m''$ and using $\textbf{P3},$ for all $x\in X$, $m''\rightarrow z\leq x\rightarrow z$ then, there exits $m'''\in \sqcap \big(X\rightarrow z\big)$ such that $m''\rightarrow z\leq m'''.$ Therefore, $m\leq m''\rightarrow z\leq m'''$ and, as $m$ and $m'''$ should be either equal or incomparable, we obtain $m=m'''=m''\rightarrow z$.
            \item[5.] Inclusion 5 follows the same pattern as item $4$.
    \item[6.] The equality ${\downarrow}\left(\sqcup X\right)^*\ =  {\downarrow}\left(\sqcap X^* \right)$ follows from the inclusions below:
\begin{align*} 
    \left(\sqcup X\right)^*\ \subseteq\  {\downarrow}\left(\sqcap X^* \right) &\implies {\downarrow}\left(\sqcup X\right)^*\ \subseteq\  {\downarrow}\left(\sqcap X^* \right) \qquad \text{by 3.}\\
    \sqcap X^*\subseteq \left(\sqcup X \right)^* &\implies {\downarrow}\left(\sqcap X^*\right)\ \subseteq\  {\downarrow}\left(\sqcup X \right)^* \qquad \text{by 4.}
\end{align*}
For the inclusion ${\uparrow}\left(\sqcap X\right)^*\ \subseteq\   {\uparrow}\left(\sqcup X^* \right)$, note that 
from 2. we get $\left(\sqcap X\right)^*\ \subseteq\  {\uparrow}\left(\sqcup X^* \right)$, then  ${\uparrow}\left(\sqcap X\right)^*\ \subseteq\  {\uparrow}\left(\sqcup X^* \right)$. 
\end{itemize}
\end{proof}
\begin{defi}~\cite{MOR2013}
Let $(M_1,\leq_1)$, $(M_2,\leq_2)$ be two multilattices and $(P,\leq)$ be a poset, and $\blacklozenge:~M_1\times M_2\rightarrow P$ be a mapping between them. We say that $\blacklozenge$ is:
\begin{itemize}
     \item[(i)] soft left-continuous in the first argument if for every non empty subset $K_1\subseteq M_1$ and elements $m_2\in M_2$ and $p\in P$ such that $k\blacklozenge m_2\leq p$ for every $k\in K_1$, then there exists $m_1\in \sqcup K_1$ satisfying $m_1\blacklozenge m_2\leq p$.
     \item[(ii)] soft left-continuous in the second argument if for every non empty subset $K_2\subseteq M_2$ and element $m_1\in M_1$ and $p\in P$ such that $m_1\blacklozenge k\leq p$ for every $k\in K_2$, then there exists $m_2\in \sqcup K_2$ satisfying $m_1\blacklozenge m_2\leq p$.
     \item[(iii)] soft left-continuous if it is soft left-continuous in the both argument.
\end{itemize}
\end{defi}
If we use a short hand notation $K_1 \blacklozenge m_2\leq p$ to mean that $k\blacklozenge m_2\leq p$ for every $k\in K_1$, then the soft left-continuous in the first argument can be neatly expressed by:
\[\forall K_1\in \mathcal{P}(M_1)\setminus\{\emptyset\}, \quad   K_1 \blacklozenge m_2\leq p \implies \exists m_1\in \sqcup K_1,\ m_1 \blacklozenge m_2 \le p.\]
\begin{prop}\label{sec: prop}
The operator $\odot$ defined on $\textbf{M}$ is soft left continuous.
\end{prop}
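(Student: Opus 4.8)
The plan is to use commutativity of $\odot$ to reduce to soft left-continuity in one argument, and then to obtain that case straight from the adjointness condition. Since $(M,\odot,\top)$ is a commutative monoid, $k\odot m=m\odot k$, so once soft left-continuity in the first argument is established, soft left-continuity in the second argument follows by simply exchanging the two arguments; hence it suffices to prove the first-argument version. So I would fix a nonempty $K\subseteq M$ and elements $m_2,p\in M$ with $k\odot m_2\leq p$ for all $k\in K$, and produce $m_1\in\sqcup K$ with $m_1\odot m_2\leq p$.

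The heart of the argument is a double application of adjointness. From $k\odot m_2\leq p$ for every $k\in K$ we get, by the adjointness condition, $k\leq m_2\rightarrow p$ for every $k\in K$, that is, $m_2\rightarrow p\in U(K)$. Since $\textbf{M}$ is a complete multilattice, $\sqcup K\neq\emptyset$ and, more importantly, the set of upper bounds of $K$ coincides with the upper closure of its set of multisuprema, $U(K)={\uparrow}(\sqcup K)$; applying this to the upper bound $m_2\rightarrow p$ yields $m_1\in\sqcup K$ with $m_1\leq m_2\rightarrow p$. A second use of adjointness, in the converse direction, gives $m_1\odot m_2\leq p$, as desired. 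One may instead route the argument through Corollary~\ref{sec:cor}(1): $p$ is an upper bound of $m_2\odot K$, so some $s\in\sqcup(m_2\odot K)$ satisfies $s\leq p$, and by Corollary~\ref{sec:cor}(1) $s=m_2\odot m_1$ with $m_1\in\sqcup K$; this version needs the same completeness fact applied to $m_2\odot K$ instead of $K$.

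The step I expect to be the only real obstacle is the inference ``$m_2\rightarrow p$ is an upper bound of $K$, therefore it lies above some element of $\sqcup K$'': this can fail in a general poset in which multisuprema merely happen to be nonempty, but it is a standard property of complete multilattices (the completeness analogue of the defining identity $U(\{a,b\})={\uparrow}(a\sqcup b)$ for pairs). Everything else is routine: two invocations of the adjointness condition and nothing more, so no extra continuity hypothesis on $\odot$ is required beyond the residuated-multilattice structure already assumed.
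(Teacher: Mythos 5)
Your argument is correct and is essentially identical to the paper's own proof: reduce to the first argument by commutativity, apply adjointness to see that $y\rightarrow z$ is an upper bound of $K$, invoke completeness of the multilattice to find a multisupremum below it, and apply adjointness again. The step you flag as the only delicate one (an upper bound of $K$ dominating some element of $\sqcup K$) is exactly the point where the paper also appeals to completeness, so no discrepancy there.
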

\begin{proof}
Since $\odot$ is commutative, we will only show that it is soft left continuous for the first argument.
Let $\emptyset\neq K\subseteq M,~y\in M$ and $z\in M$. 
We assume that $k\odot y\leq z$ for all $k\in K$.  We are looking for an element $x\in \sqcup K$ such that $x\odot y\leq z$.
For all $k$ in $K$, $k\leq y\rightarrow z$.  Therefore $y\rightarrow z$ is an upper bound of $K$. Since $M$ is a complete multilattice, there is an element $x\in\sqcup K$ such that $x\leq y\rightarrow z$, i.e. $x\odot y \le z$.
\end{proof}
\section{Fuzzy Rough Sets based on Residuated Multilattice}~\label{sec:FRS on RML}
Fuzzy rough set theory (FRST) was introduced in \cite{DP1990} by Dubois and Prade. In this section we explore how to use residuated multilattices as underlying set of truth values in fuzzy rough set theory. 
We start by recalling some basic notions on $L$-fuzzy rough set as defined in \cite{RK2004}, and then extend these notions to the context of $M$-fuzzy rough set, where $M$ is a complete residuated multilattice.

\begin{defi} \cite{GO1967}\label{def:fuzzy set and fuzzy relation} 
Let $(L, \wedge, \vee, \odot, \rightarrow, \bot, \top)$ be a complete residuated lattice and let $X$ be a non empty universe. Any mapping $R:X^n\longrightarrow L$ for $n\geq 2$, is called an \new{$n$-ary $L$-fuzzy relation} on $X$. For $n=2$, $R$ is called a binary $L$-fuzzy relation on $X$. An \new{$L$-fuzzy set} in $X$ is just a map $f:X\longrightarrow L$. The family of all $L$-fuzzy sets in $X$ will be denoted by $L^X$. 
\end{defi}

Some properties of a binary relation are given below:
\begin{defi} \cite{GO1967, RK2004} 
Let $R$ be a binary $L$-fuzzy relation, we say that $R$ is:
\begin{itemize}
    \item[$\bullet$] \new{reflexive} if $R(x,x)=\top$, for every $x\in  X$;
    \item[$\bullet$] \new{symmetric} if $R(x,y)=R(y,x)$, for all $x,y\in X$;
    \item[$\bullet$] \new{euclidean} if $R(z,x)\odot R(z,y)\leq R(x,y)$, for all $x,y,z\in X$;
    \item[$\bullet$] \new{transitive} if $R(x,z)\odot R(z,y)\leq R(x,y)$, for all $x,y,z\in X$.
\end{itemize}
\end{defi}
Let $(L,\leq, \odot, \rightarrow, \bot, \top)$ be a complete residuated lattice and $X$ a non-empty universe. The  order relation $\le$ on $L$ is extended on $L^X$ by $f\leq g :\iff f(x)\leq g(x)$ for all $x\in X$. The operations $\wedge$, $\vee$, $\odot$, $\rightarrow$, $^*$, $\Uparrow$ and $\Downarrow$ are defined on $L^X$ as follows: 
\begin{align*}
(f\wedge g)(x)&:=f(x)\wedge g(x), & (f\vee g)(x)&:=f(x)\vee g(x),\\
(f\odot g)(x)&:=f(x)\odot g(x) & (f\rightarrow g)(x)&:=f(x)\rightarrow g(x) \\
\Uparrow(x)&:=\top \text{ and } \Downarrow(x):=\bot & f^*(x)&:=(f(x))^*  
\end{align*}

\begin{defi} \label{def: approx space} \cite{RK2004}  
An \new{$L$-fuzzy approximation space} is a pair $(X,R)$, where $X$ is a universe and $R$ a binary $L$-fuzzy relation. Two mappings $\underline{R}, ~\overline{R}:L^X\rightarrow L^X$ are defined on $L^X$ by: for every $f\in L^X$ and every $x\in X$,
\begin{align*}
    \underline{R}(f)(x):=\underset{y\in X}{\bigwedge}(R(x,y)\rightarrow f(y)) \quad
    \text{ and }\quad
\overline{R}(f)(x):=\underset{y\in X}{\bigvee}(R(x,y)\odot f(y)).
\end{align*}
$\underline{R}(f)$ (resp. $\overline{R}(f)$) is called a lower (resp. upper)  \new{$L$-fuzzy rough approximation} of $f$. 
\end{defi}

\begin{prop} \cite{RK2004}
Let $(L, \wedge, \vee, \odot, \rightarrow, \bot, \top)$ be a complete residuated lattice. For every $L$-fuzzy approximation space $(X, R)$ and every $f\in L^X$ the following conditions hold:
\begin{itemize}
\item[(i)] $\overline{R}(\Downarrow)=\Downarrow$ and $\underline{R}(\Uparrow)=\Uparrow$;
    \item[(ii)] If $f\leq g$, then $\underline{R}(f)\leq\underline{R}(g)$ and $\overline{R}(f)\leq \overline{R}(g)$;
    \item[(iii)] $\underline{R}(f)\leq (\overline{R}( f^*))^*$,\quad
            $\overline{R}(f)\leq (\underline{R}( f^*))^*$\quad \mbox{ and }\quad
            $(\overline{R}(f))^*=\underline{R}(f^*)$.
\end{itemize}
\end{prop}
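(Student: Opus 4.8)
The plan is to prove all three items by unfolding the definitions of $\underline{R}$ and $\overline{R}$ at a fixed point $x\in X$ and reducing each claim to an identity in $L$. Since the order on $L^{X}$ is pointwise, every (in)equality of $L$-fuzzy sets is equivalent to the corresponding (in)equality in $L$ evaluated at each $x$, so after fixing $x$ the argument is purely a computation inside the complete residuated lattice $L$. The only ingredients needed are the pocrim identities \textbf{P1}, \textbf{P3}, \textbf{P8} and the residuated-lattice identities \textbf{L1}, \textbf{L4}, \textbf{L6}.

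For (i), I would compute $\overline{R}(\Downarrow)(x)=\bigvee_{y\in X}\bigl(R(x,y)\odot\bot\bigr)$; by \textbf{P1} each summand is $\le\bot$, hence equals $\bot$, so the join is $\bot=\Downarrow(x)$. Dually $\underline{R}(\Uparrow)(x)=\bigwedge_{y\in X}\bigl(R(x,y)\to\top\bigr)=\bigwedge_{y\in X}\top=\top=\Uparrow(x)$ by \textbf{P8}. For (ii), assume $f\le g$, i.e. $f(y)\le g(y)$ for all $y$; the monotonicity clauses of \textbf{P3} give $R(x,y)\to f(y)\le R(x,y)\to g(y)$ and $R(x,y)\odot f(y)\le R(x,y)\odot g(y)$ for every $y$, and taking the meet (resp.\ the join) over $y$ yields $\underline{R}(f)\le\underline{R}(g)$ and $\overline{R}(f)\le\overline{R}(g)$.

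For (iii) I would first establish the equality $\bigl(\overline{R}(f)\bigr)^{*}=\underline{R}(f^{*})$, which is the heart of the matter. Distributing $^{*}$ over the (possibly infinite) join by \textbf{L4} and then applying \textbf{L6} termwise,
\[
\bigl(\overline{R}(f)\bigr)^{*}(x)=\Bigl(\bigvee_{y\in X}\bigl(R(x,y)\odot f(y)\bigr)\Bigr)^{*}=\bigwedge_{y\in X}\bigl(R(x,y)\odot f(y)\bigr)^{*}=\bigwedge_{y\in X}\bigl(R(x,y)\to f^{*}(y)\bigr)=\underline{R}(f^{*})(x).
\]
The two inclusions then follow formally. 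Replacing $f$ by $f^{*}$ in the equality gives $\bigl(\overline{R}(f^{*})\bigr)^{*}=\underline{R}(f^{**})$, and since $f\le f^{**}$ by \textbf{L1}, item (ii) yields $\underline{R}(f)\le\underline{R}(f^{**})=\bigl(\overline{R}(f^{*})\bigr)^{*}$. Applying $^{*}$ to both sides of the equality gives $\bigl(\overline{R}(f)\bigr)^{**}=\bigl(\underline{R}(f^{*})\bigr)^{*}$, and $\overline{R}(f)\le\bigl(\overline{R}(f)\bigr)^{**}$ (again \textbf{L1}) yields $\overline{R}(f)\le\bigl(\underline{R}(f^{*})\bigr)^{*}$.

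No step poses a genuine obstacle; the one point deserving attention is the equality in (iii), where one must make sure \textbf{L4} is invoked for an arbitrary index set (it is stated for arbitrary families) and that \textbf{L6} is applied argument-by-argument inside the meet. As a consistency check I would also note that $^{*}$ is antitone (take $c=\bot$ in \textbf{P3}), since this is used implicitly when deducing the inclusions of (iii) from the equality. Alternatively, the inclusions can be proved directly by rewriting $\bigl(\overline{R}(f^{*})\bigr)^{*}(x)=\bigwedge_{y}\bigl(R(x,y)\to f^{**}(y)\bigr)$ and $\bigl(\underline{R}(f^{*})\bigr)^{*}(x)=\bigl(\bigwedge_{y}(R(x,y)\odot f(y))^{*}\bigr)^{*}$ and comparing termwise with \textbf{L1}, but the route via the equality is the cleanest.
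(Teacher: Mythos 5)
Your proof is correct, and all three items check out: (i) and (ii) are immediate from \textbf{P1}, \textbf{P8}, \textbf{P3}, and for (iii) the identity $\bigl(\overline{R}(f)\bigr)^{*}=\underline{R}(f^{*})$ via \textbf{L4} and \textbf{L6} does give both inequalities formally, using $f\le f^{**}$ together with monotonicity from (ii), and $\overline{R}(f)\le\bigl(\overline{R}(f)\bigr)^{**}$ from \textbf{L1}. Note, however, that the paper does not actually prove this proposition: it is quoted from Radzikowska--Kerre and serves as the template for Proposition \ref{sec: approxtop}, the multilattice analogue, which the paper does prove. Comparing your argument with that proof is instructive. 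In a residuated multilattice one cannot invoke \textbf{L4} (there is no distributivity of $^{*}$ over arbitrary joins, since multisuprema need not be singletons), so the paper replaces your one-line computation by a detour through Corollary \ref{sec:cor}: it first shows $\underline{R}(f^{*})(x)=\bigwedge_{y}\bigl(R(x,y)\odot f(y)\bigr)^{*}$ using \textbf{P4}, then uses the containments $\left(\sqcap X\right)^{*}\subseteq{\uparrow}\left(\sqcup X^{*}\right)$ and $\sqcap X^{*}\subseteq\left(\sqcup X\right)^{*}$, plus the fact that the relevant multiinfima collapse to singletons, to squeeze out the equality and the two inequalities element by element. Your route is cleaner and entirely adequate for the stated (lattice) version; the paper's heavier machinery is the price paid for the multilattice generalization, and your proof would not transfer verbatim to that setting precisely at the step where \textbf{L4} is applied.
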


Now we can introduce the notion of $M$-fuzzy approximation. 
\begin{defi}
Let \textbf{M} be a residuated multilattice. A \textbf{M-fuzzy set} $f$ on a nonempty universe $X$ is a mapping $f:~ X\longrightarrow M$.
\end{defi}
The family of all $M$-fuzzy sets in $X$ will be denoted by $M^X$, the operators $\odot, \empty^*, \rightarrow$\\ and the maps $\Uparrow, \Downarrow$ are defined exactly as in $L^X$, replacing the residuated lattice $L$ with a residuated multilattice $M$. Similarly the notions of \new{$M$-fuzzy relation}, reflexivity, symmetry, transitivity, \new{$M$-tolerance} and \new{$M$-equivalence} are defined. 


For $R\in M^{X^2}$ and $x\in X$, we write $R_x$ to denote the $M$-fuzzy set on $X$ defined by $R_x(y):=R(x,y)$, for every $y\in X$. We  can now  define the $M$-fuzzy rough approximators.
\begin{defi}
Let \textbf{M} be a residuated multilattice. A \new{$M$-fuzzy approximation space} is a pair $(X, R)$ where $X$  is a non empty universe and $R\in M^{X^2}$.
\end{defi}
To define the $M$-fuzzy rough approximators we need certain infima and suprema.
\begin{thm}
Let $(M, \leq, \rightarrow, \bot, \top)$ be a complete residuated multilattice, $f\in M^{X}$ and $R\in M^{X^2}$. Then, for any $x\in X$ the following holds:
\begin{itemize}
    \item[(i)] The set $\{R(x,y)\rightarrow f(y)\mid~y\in X\}$ has an infimum.
    \item[(ii)] The set $\{R(x,y)\odot f(y)\mid~y\in X\}$ has a supremum. 
\end{itemize}
\end{thm}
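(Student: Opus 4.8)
The plan is to establish (ii) in detail and to obtain (i) by a parallel argument, with the adjunction $a\odot b\le c\iff a\le b\to c$ letting the $\odot$-scheme carry over to $\to$. Fix $x\in X$ and write $a_y:=R(x,y)$, $b_y:=f(y)$, $Z:=\{a_y\odot b_y\mid y\in X\}$ and $W:=\{a_y\to b_y\mid y\in X\}$. Since $\mathbf M$ is a complete multilattice, the multi-extrema $\sqcup Z$, $\sqcap W$, $\sqcap U(Z)$, $\sqcup L(W)$ are all non-empty; what is \emph{not} automatic, and is exactly what (i) and (ii) demand, is that the relevant one of them is a singleton. The first thing I would record is the adjointness reformulation of the bound sets: for $z\in M$ one has $z\in U(Z)\iff a_y\odot b_y\le z\ (\forall y)\iff b_y\le a_y\to z\ (\forall y)$, so $U(Z)=\bigcap_{y\in X}{\uparrow}(a_y\odot b_y)$ is an up-set in which every generator $a_y\odot b_y$ is itself a lower bound of $U(Z)$; dually $L(W)=\{t\in M\mid t\odot a_y\le b_y\ \forall y\}=\bigcap_{y\in X}{\downarrow}(a_y\to b_y)$ is a down-set containing $\bot$ in which every $a_y\to b_y$ is an upper bound of $L(W)$.

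These descriptions give a clean reduction. Suppose I can prove $\sqcap U(Z)\subseteq U(Z)$. Then an $n\in\sqcap U(Z)$ is simultaneously a maximal lower bound of $U(Z)$ and a member of $U(Z)$, hence $n=\min U(Z)$; as minima are unique this forces $\sqcap U(Z)=\{n\}$ and identifies $n$ with the supremum $\bigvee Z$. Symmetrically, $\sqcup L(W)\subseteq L(W)$ produces an element that is both a minimal upper bound of $L(W)$ and a member of $L(W)$, i.e.\ $\max L(W)=\bigwedge W$, which is (i). So everything comes down to these two closure statements, and for those I would invoke the residuated structure: the soft left-continuity of $\odot$ proved earlier, the monotonicity laws \textbf{P3}, and the laws linking $\sqcap,\sqcup$ with $\to$, notably \textbf{M9}. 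Concretely for (ii): take $n\in\sqcap U(Z)$; then $n\le z$ for every $z\in U(Z)$, so for each fixed $y$ the element $b_y\to n$ sits below every $b_y\to z$ (by \textbf{P3}), while $a_y$ sits below every $b_y\to z$ as well; using \textbf{M9} to relate $b_y\to n$ to these residuals and then soft left-continuity of $\odot$ to pass the resulting inequality onto $n$ itself, one obtains $a_y\le b_y\to n$, i.e.\ $a_y\odot b_y\le n$, for all $y$, which is $n\in U(Z)$. Running the same computation with $\odot$ replaced by $\to$ and $U(Z)$ by $L(W)$ gives $\sqcup L(W)\subseteq L(W)$.

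I expect the genuine difficulty to lie precisely in that last passage. Soft left-continuity of $\odot$ only guarantees the existence of \emph{some} element of a multi-extremum with a prescribed inequality, whereas the closure argument needs the inequality for the \emph{particular} maximal lower bound $n$ (respectively minimal upper bound $m$) that was fixed at the outset; the care is in organising the comparison so that no extremal element is chosen and no circular dependence on the conclusion creeps in. Mere completeness of the multilattice will not suffice — in a pure residuated multilattice $\sqcap U(Z)$ could a priori split into two incomparable maximal lower bounds — so it is exactly the combination of adjointness, \textbf{M9}, and soft left-continuity of $\odot$ that has to be made to rule this out, and extracting the closure from that combination is the heart of the argument.
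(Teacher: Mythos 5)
There is a genuine gap. You correctly reduce the theorem to the two closure statements $\sqcap U(Z)\subseteq U(Z)$ and $\sqcup L(W)\subseteq L(W)$ (with $Z=\{a_y\odot b_y\}$, $W=\{a_y\to b_y\}$), but you never actually prove them, and the sketch you give does not go through. Observing that $a_y$ and $b_y\to n$ are both lower bounds of the set $\{b_y\to z : z\in U(Z)\}$ yields no comparison between $a_y$ and $b_y\to n$: in a pure multilattice two lower bounds of a common set are typically incomparable (in $M6$ of Fig.~1, $a$ and $b$ are distinct maximal lower bounds of $U(\{a,b\})=\{c,d,\top\}$, so $\sqcap U(Z)\subseteq U(Z)$ is false as a purely order-theoretic statement). \textbf{M9} concerns $z\to(x\sqcap y)$, which does not occur in your chain of inequalities, and soft left-continuity only produces \emph{some} element of a multisupremum satisfying an inequality, whereas you need $a_y\odot b_y\le n$ for the \emph{particular} maximal lower bound $n$ fixed at the outset --- a tension you name yourself but do not resolve. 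Since $\sqcap U(Z)\subseteq U(Z)$ is essentially equivalent to the conclusion (if $\sup Z=s$ exists then $U(Z)={\uparrow}s$ and $\sqcap U(Z)=\{s\}$), the proposal replaces the theorem by a statement of the same strength and leaves it open.

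The paper's argument is more direct and avoids $U(Z)$ and $L(W)$ altogether: take two elements $x_1,x_2$ of $\sqcap W$ (resp.\ $\sqcup Z$) and produce a single $a\in x_1\sqcup x_2$ (resp.\ $b\in x_1\sqcap x_2$) that is again a lower (resp.\ upper) bound of the whole family. For the infimum this is exactly where adjointness and soft left-continuity of $\odot$ enter: from $x_i\odot R(x,y)\le f(y)$ for $i=1,2$ one gets $a\in x_1\sqcup x_2$ with $a\odot R(x,y)\le f(y)$, hence $a\le R(x,y)\to f(y)$ for all $y$; for the supremum it is the multilattice axiom applied to $R(x,y)\odot f(y)\le x_1,x_2$. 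Maximality of $x_1,x_2$ in $L(W)$ (resp.\ minimality in $U(Z)$) together with $x_1,x_2\le a$ (resp.\ $b\le x_1,x_2$) then forces $x_1=a=x_2$, so the multi-extremum collapses to a singleton. This pairwise comparison of two extremal elements is the mechanism your closure step is missing; if you want to keep your formulation, you would still have to run essentially this argument inside it.
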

\begin{proof} Because we are working with a complete residuated multilattice, for all $x\in X$ and $f\in M^X$ the sets $\sqcap\{R(x,y)\rightarrow f(y)\mid~y\in X\}$ and $\sqcup\{R(x,y)\odot f(y)\mid~y\in X\}$ are nonempty. We will prove that $\sqcap\{R(x,y)\rightarrow f(y)\mid~y\in X\}$ and $\sqcup\{R(x,y)\odot f(y)\mid~y\in X\}$ are singletons.
\begin{itemize}
\item[(i)] 
Let $x_1$ and $x_2$ be in  $\sqcap\{R(x,y)\rightarrow f(y)~\mid~y\in X\}$. We have $x_1\leq R(x,y)\rightarrow f(y)$ and $x_2\leq R(x,y)\rightarrow f(y)$, for every $y\in X$. Then $x_1\odot R(x,y)\leq f(y)$ and $x_2\odot R(x,y)\leq f(y)$, for every $y\in X$.
Since $\odot$ is  soft left continuous , there exists $a\in\sqcup\{x_1; x_2\}$, such that $a\odot R(x,y)\leq f(y)$. Hence $a\leq R(x,y)\rightarrow f(y)$, for every $y\in X$. 
Hence $a$ is a lower bound of the set $\{R(x,y)\rightarrow f(y)~\mid~y\in X\}$; as $x_1$ and $x_2$ are maximal lower bounds, we obtain that $a=x_1=x_2$. Thus, all multiinfima collapse in one and, so, there is an infimum.
\item[(ii)] Let $x_1$ and $x_2$ $\in\sqcup\{R(x,y)\odot f(y)~\mid~y\in X\}$, we have that $R(x,y)\odot f(y)\leq x_1$ and $R(x,y)\odot f(y)\leq x_2$, for every $y\in X$. Then there exist $b\in \sqcap\{x_1, x_2\}$ such that for every $y\in X$, $R(x,y)\odot f(y)\leq b$. Hence $b$ is an upper bound of $\{R(x,y)\odot f(y)~\mid~y\in X\}$ and $b\leq x_1, x_2$; as $x_1$ and $x_2$ are minimal upper bounds, we obtain $x_1=b=x_2$. Thus, all multisuprema collapse in one, then there is a suprema.
\end{itemize}
\end{proof}
\begin{defi}
Let $(M,~\leq,~\odot,~\rightarrow,~\bot,~\top)$ be a complete residuated multilattice and $(X, R)$ be a $M$-fuzzy approximation space. Define the mappings $\underline{R},~\overline{R}:M^X\rightarrow M^X$ by: 
\begin{align*}
    \underline{R}(f)(x):=\bigwedge_{y\in X}(R(x,y)\rightarrow f(y))\quad \text{ and }\quad \overline{R}(f)(x):=\bigvee_{y\in X}(R(x,y)\odot f(y)),
\end{align*}
 for every $f\in M^X$ and every $x\in X$. The operators $\underline{R}$ and $\overline{R}$ are respectively called lower and upper \new{$M$-fuzzy rough approximators}, and  
%
$\underline{R}(f)$ (resp. $\overline{R}(f)$) is called a lower (resp. upper) \new{$M$-fuzzy rough approximation} of $f$.
\end{defi}
\begin{defi}
Let $(X,~R)$ a  $M$-fuzzy approximation space. A pair ($\overline{R}(f), \underline{R}(f))\in M^X\times M^X$, for some $f\in M^X$ is called a $M$-fuzzy rough set in $(X, R)$.
\end{defi}
We are going now to study some properties of $M$-fuzzy rough approximators.
\begin{prop}\label{sec: approxtop}
Let $(M,~\leq,~\odot,~\rightarrow,~\bot,~\top)$ be a complete residuated multilattice. For every $M$-fuzzy approximation space $(X, R)$, every $f\in M^X$ the following conditions hold:
\begin{itemize}
\item[(i)] $\overline{R}(\Downarrow)=\Downarrow$ \quad and \quad $\underline{R}(\Uparrow)=\Uparrow$;
    \item[(ii)] If $f\leq g$,\quad then \quad $\underline{R}(f)\leq\underline{R}(g)$\quad and \quad $\overline{R}(f)\leq \overline{R}(g)$;
        \item[(iii)] $\underline{R}(f)\leq \Big(\overline{R}\big( f^*\big)\Big)^*$,\quad 
            $\overline{R}(f)\leq \Big(\underline{R}\big( f^*\big)\Big)^*$\quad and \quad 
            $\Big(\overline{R}(f)\Big)^*=\underline{R}(f^*)$.

\end{itemize}
\end{prop}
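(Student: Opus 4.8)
The plan is to prove the three items of Proposition~\ref{sec: approxtop} by unwinding the pointwise definitions of $\underline{R}$ and $\overline{R}$ and appealing to the properties of the residuation in a pocrim (Proposition with labels \textbf{P1}--\textbf{P9}) together with the collapsing phenomenon established in the preceding theorem, which guarantees that $\bigwedge_{y}(R(x,y)\to f(y))$ and $\bigvee_{y}(R(x,y)\odot f(y))$ are genuine infima and suprema, so all the lattice-style manipulations are licit.

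For (i): fix $x\in X$. Since $\Downarrow(y)=\bot$ for all $y$, we have $\overline{R}(\Downarrow)(x)=\bigvee_{y\in X}(R(x,y)\odot\bot)$, and by \textbf{P1} each $R(x,y)\odot\bot\leq\bot$, hence equals $\bot$; therefore the supremum is $\bot=\Downarrow(x)$. Dually $\underline{R}(\Uparrow)(x)=\bigwedge_{y\in X}(R(x,y)\to\top)$, and by \textbf{P8} each term is $\top$, so the infimum is $\top=\Uparrow(x)$. First I would state these two lines and be done with (i).

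For (ii): assume $f\leq g$, i.e. $f(y)\leq g(y)$ for every $y$. By \textbf{P3}, for every $y$ we get $R(x,y)\to f(y)\leq R(x,y)\to g(y)$ (monotonicity of $\to$ in the second argument) and $R(x,y)\odot f(y)\leq R(x,y)\odot g(y)$ (monotonicity of $\odot$). Taking infima over $y$ in the first chain gives $\underline{R}(f)(x)\leq\underline{R}(g)(x)$, and taking suprema over $y$ in the second gives $\overline{R}(f)(x)\leq\overline{R}(g)(x)$; since $x$ was arbitrary, the pointwise order inequalities follow.

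For (iii): the third equality $\big(\overline{R}(f)\big)^*=\underline{R}(f^*)$ is the crux, and I expect it to be the main obstacle because it requires distributing $*$ over a supremum. Fix $x$. Then $\big(\overline{R}(f)(x)\big)^*=\big(\bigvee_{y}(R(x,y)\odot f(y))\big)\to\bot$, and by \textbf{L2} (which gives $(\bigvee_i x_i)\to c=\bigwedge_i(x_i\to c)$) this equals $\bigwedge_{y}\big((R(x,y)\odot f(y))\to\bot\big)=\bigwedge_{y}(R(x,y)\odot f(y))^*$; by \textbf{M11} (or \textbf{P4} with $c=\bot$) each $(R(x,y)\odot f(y))^*=R(x,y)\to f(y)^*$, so the whole thing is $\bigwedge_{y}(R(x,y)\to f^*(y))=\underline{R}(f^*)(x)$. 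Here I must be a little careful: \textbf{L2} is stated for residuated lattices, but its proof only uses that the relevant suprema exist, which the preceding theorem supplies; alternatively one invokes Corollary~\ref{sec:cor}(4) with $z=\bot$, namely $\sqcap X^*\subseteq(\sqcup X)^*$, together with the reverse inclusion coming from \textbf{P3}, to force the singleton equality. Once the equality is in hand, the two inequalities are immediate: apply $*$ to both sides of the identity $\big(\overline{R}(f^*)\big)^*=\underline{R}(f^{**})$ and use $f\leq f^{**}$ (from \textbf{M10}) together with monotonicity (ii) and \textbf{L1}/\textbf{M10} ($a\leq a^{**}$, $a^*=a^{***}$) to deduce $\overline{R}(f)\leq\overline{R}(f^{**})=\big(\underline{R}(f^*)\big)^*$; symmetrically, from $\big(\overline{R}(f)\big)^*=\underline{R}(f^*)$ apply $*$ to get $\underline{R}(f)\le \underline{R}(f^{**})=\big(\overline{R}(f^*)\big)^{**}$... rather, more directly: from the equality $\big(\overline{R}(f^{*})\big)^{*}=\underline{R}(f^{**})\ge \underline{R}(f)$ by (ii) and $f\le f^{**}$, we get $\underline{R}(f)\le\big(\overline{R}(f^{*})\big)^{*}$. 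This closes (iii).
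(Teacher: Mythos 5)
Your items (i) and (ii) coincide with the paper's proof (direct pointwise computation, then \textbf{P3} plus taking infima/suprema). For (iii) you take a genuinely different, and in fact more economical, route: you establish the equality $\big(\overline{R}(f)\big)^*=\underline{R}(f^*)$ first, by distributing ${}^*$ over the supremum and invoking \textbf{M11}, and then derive both inequalities as formal consequences of that equality together with $a\le a^{**}$ and monotonicity. The paper instead proves the three claims of (iii) separately and in the opposite order, each time running the multilattice machinery from scratch: it uses Corollary~\ref{sec:cor}(2) and an auxiliary element $t\in\sqcup(\cdots)^{**}$ for $\overline{R}(f)\le\big(\underline{R}(f^*)\big)^*$, and Corollary~\ref{sec:cor}(4) plus the singleton-collapse argument (from the theorem on existence of the infima) for the other two. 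Your approach buys brevity and makes the logical dependence transparent; the paper's buys independence from the lattice identity \textbf{L2}. Your worry about \textbf{L2} being stated only for residuated lattices is well placed, and your fallback via Corollary~\ref{sec:cor}(4) is exactly what the paper does: since $\sqcap X^*\subseteq(\sqcup X)^*$ and the right-hand side is the singleton $\{(\overline{R}(f)(x))^*\}$ while $\sqcap X^*=\underline{R}(f^*)(x)$ is nonempty, equality is forced without any reverse inclusion from \textbf{P3}. One small slip: your step $\overline{R}(f)\le\overline{R}(f^{**})=\big(\underline{R}(f^*)\big)^*$ asserts an identity $\overline{R}(f^{**})=\big(\underline{R}(f^*)\big)^*$ that does not follow (it would need $\big(\overline{R}(f)\big)^{**}=\overline{R}(f^{**})$); the correct and shorter chain is $\overline{R}(f)\le\big(\overline{R}(f)\big)^{**}=\big(\underline{R}(f^*)\big)^*$, using \textbf{M10} and your equality directly. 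With that repair the argument is complete.
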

\begin{proof}
\begin{itemize}
\item[(i)] Let $x\in X$;
\begin{align*}
\overline{R}(\Downarrow)(x)=\bigvee_{y\in X}\Big(R(x,y)\odot \Downarrow(y)\Big)
                           =\bigvee_{y\in X}\Big(R(x,y)\odot \bot\Big)
                           =\bigvee_{y\in X}\bot
                           =\bot.
\end{align*}
\begin{align*}
\underline{R}(\Uparrow)(x)=\bigwedge_{y\in X}\Big(R(x,y)\rightarrow \Uparrow(y)\Big)
                          =\bigwedge_{y\in X}\Big(R(x,y)\rightarrow \top\Big)
                          =\bigwedge_{y\in X}\top
                          =\top.
\end{align*}
\item[(ii)] Let $x\in X$ and $f\le g$. Then $f(y)\le g(y)$ for all $y\in X$. Thus 
\begin{align*}
f\leq g& \implies R(x,y)\rightarrow f(y)\leq R(x,y)\rightarrow g(y) \text{ for all } y\in X\\
 &\implies \bigwedge_{y\in X} \Big(R(x,y)\rightarrow f(y)\Big)\leq\bigwedge_{y\in X} \Big(R(x,y)\rightarrow g(y)\Big)\\
  &\implies\underline{R}(f)(x)\leq\underline{R}(g)(x).
\end{align*}
and 
\begin{align*}
f\leq g &\implies R(x,y)\odot f(y)\leq R(x,y)\odot g(y)\text{ for all } y\in X\\
&\implies \bigvee_{y\in X} \Big(R(x,y)\odot f(y)\Big)\leq\bigvee_{y\in X} \Big(R(x,y)\odot g(y)\Big)\\
&\implies \overline{R}(f)(x)\leq\overline{R}(g)(x).
\end{align*}
Thus, $f\leq g$  implies $\underline{R}(f)\leq\underline{R}(g)$ and $\overline{R}(f)\leq\overline{R}(g)$. 
\item[(iii)] Let $x\in X$ and $f\in M^X$.
\begin{align*}
\underline{R}(f^*)(x) &=\bigwedge_{y\in X}\Big(R(x,y)\rightarrow f^*(y)\Big) = 
\bigwedge_{y\in X}\Big(R(x,y)\rightarrow \big(f(y)\big)^*\Big)\\
                         &=\bigwedge_{y\in X}\Big(R(x,y)\rightarrow \big(f(y)\rightarrow\bot\big)\Big) =
\bigwedge_{y\in X}\Big(\big(R(x,y)\odot f(y)\big)\rightarrow\bot\Big), \text{ by (P4)}\\
                         &=\bigwedge_{y\in X}\big(R(x,y)\odot f(y)\big)^*.
\end{align*}
Thus 
\begin{align*}
    \left( \underline{R}(f^*)(x)  \right)^* \in \left(\underset{y\in X}{\sqcap}\Big(R(x,y)\odot f(y)\Big)^*\right)^* 
                         &\subseteq\ {\uparrow}\left(\underset{y\in X}{\sqcup}\Big(R(x,y)\odot f(y)\Big)^{**}\right), \text{ by Corollary \ref{sec:cor} (item 2)}.
\end{align*}
Then, there is $t\in\underset{y\in X}{\sqcup}\Big(R(x,y)\odot f(y)\Big)^{**}$ such that $t\leq \big(\underline{R}(f^*)(x)\big)^*$. Since $t$ is in $\underset{y\in X}{\sqcup}\Big(R(x,y)\odot f(y)\Big)^{**} $, it follows that $\big(R(x,y)\odot f(y)\big)^{**}\leq t$,  for all $y\in X$. Therefore, $R(x,y)\odot f(y)\leq\Big(R(x,y)\odot f(y)\Big)^{**}\leq t\leq\Big(\underline{R}(f^*)(x)\Big)^*$, for all $y\in X$. 
 Thus, $\displaystyle \overline{R}(f)(x)=\bigvee_{y\in X}\Big(R(x,y)\odot f(y)\Big)\leq  \Big(\underline{R}(f^*)(x)\Big)^*$
for all $x\in X$.
\begin{align*}
(\overline{R}(f^*)(x))^*&=\left( \underset{y\in X}{\sqcup}\Big(R(x,y)\odot\big(f(y)\big)^*\Big)\right)^*\\
                         &\supseteq\left(\underset{y\in X}{\sqcap}\Big(R(x,y)\odot \big(f(y)\big)^*\Big)^{*}\right),\text{ by Corollary \ref{sec:cor} item 4} \\
                         &\stackrel{ \text{M11} }{=}\left(\underset{y\in X}{\sqcap}\Big(R(x,y)\rightarrow \big(f(y)\big)^{**}\Big)\right)
                         \end{align*}
Since $ \underset{y\in X}{\sqcap}\Big(R(x,y)\rightarrow (f(y))^{**}\Big)=\underline{R}(f^{**})(x)$ is a singleton, we therefore have,
\begin{align*}
\Big(\overline{R}(f^*)(x)\Big)^* &=\left(\underset{y\in X}{\sqcap}\Big(R(x,y)\rightarrow \big(f(y)\big)^{**}\Big)\right) 
                         \stackrel{ \text{P3} }{\geq}\underset{y\in X}{\sqcap }\Big(R(x,y)\rightarrow f(y)\Big) 
                         =\underline{R}(f)(x).
\end{align*}
For all $x\in X$
\begin{align*}
\Big(\overline{R}(f)(x)\Big)^*&=\left(\bigvee_{y\in X}\Big(R(x,y)\odot(f(y))\Big)\right)^*\\
                     &\supseteq\underset{y\in X}{\sqcap}\Big(R(x,y)\odot f(y)\Big)^*,\text{ by Corollary \ref{sec:cor} (item 4)} \\
                     &\stackrel{M11}{=}\underset{y\in Y}{\sqcap }\Big(R(x,y)\rightarrow \big(f(y)\big)^*\Big) \\
                     \end{align*}
Since $\underset{y\in X}{\sqcap} \Big(R(x,y)\rightarrow \big(f(y)\big)^*\Big)=\underline{R}(f^*)(x)$ is a singleton, therefore,\\ $\Big(\overline{R}(f)(x)\Big)^*=\underset{y\in X}{\sqcap} \Big(R(x,y)\rightarrow \big(f(y)\big)^*\Big)=\underline{R}(f^*)(x)$.
\end{itemize}
\end{proof}

\section{Selected Classes of M-Fuzzy Rough Set}\label{sec:MFRS classes}
In this section, according to the properties of the $M$-fuzzy relation $R$, we will present some properties of the corresponding $M$-fuzzy rough set, where $M$ denotes the complete residuated multilattice $(M,\leq,\odot,\rightarrow, \bot, \top)$.\\
The following proposition is the continuation of proposition \ref{sec: approxtop}.
\begin{prop}\label{sec: approxbot}
For any reflexive $M$-fuzzy approximation space $(X,R)$ the following properties hold.
\begin{align*}
    \textup{(i)}\quad \underline{R}(\Downarrow) = \Downarrow &&
\textup{(ii)} \quad \overline{R}(\Uparrow) = \Uparrow.
\end{align*}
\end{prop}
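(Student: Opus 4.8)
The plan is to unwind both definitions at a fixed point $x\in X$ and exploit reflexivity to exhibit the extreme element $\bot$ (resp.\ $\top$) among the terms being aggregated. First recall that by the Theorem just above, for every $f\in M^X$ and every $x$ the sets $\{R(x,y)\to f(y)\mid y\in X\}$ and $\{R(x,y)\odot f(y)\mid y\in X\}$ have an infimum and a supremum respectively, so the symbols $\bigwedge$ and $\bigvee$ appearing in the definitions of $\underline{R}$ and $\overline{R}$ are legitimate; in particular no multilattice subtlety arises here.

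For (i), I would compute
\[
\underline{R}(\Downarrow)(x)=\bigwedge_{y\in X}\bigl(R(x,y)\to\Downarrow(y)\bigr)=\bigwedge_{y\in X}\bigl(R(x,y)\to\bot\bigr).
\]
Taking $y=x$ and using reflexivity $R(x,x)=\top$ together with \textbf{P8} ($\top\to\bot=\bot$), the element $\bot$ occurs among the terms of this infimum. Since the infimum is in particular a lower bound of all terms, it is $\le\bot$, and as $\bot$ is the bottom element of $M$ this forces $\underline{R}(\Downarrow)(x)=\bot=\Downarrow(x)$. As $x$ was arbitrary, $\underline{R}(\Downarrow)=\Downarrow$.

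For (ii), I would compute
\[
\overline{R}(\Uparrow)(x)=\bigvee_{y\in X}\bigl(R(x,y)\odot\Uparrow(y)\bigr)=\bigvee_{y\in X}\bigl(R(x,y)\odot\top\bigr)=\bigvee_{y\in X}R(x,y),
\]
using that $\top$ is the neutral element of the monoid $(M,\odot,\top)$. Again taking $y=x$ and using $R(x,x)=\top$, the element $\top$ is one of the terms, so the supremum (being an upper bound) is $\ge\top$, hence equals $\top$. Thus $\overline{R}(\Uparrow)(x)=\top=\Uparrow(x)$ for all $x$, i.e.\ $\overline{R}(\Uparrow)=\Uparrow$. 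There is no real obstacle in this argument; the only point deserving care is the appeal to the Theorem to guarantee that the infimum and supremum exist (so that comparing them with $\bot$ and $\top$ is meaningful), and the use of \textbf{P8} and the monoidal unit law to identify the critical term coming from reflexivity.
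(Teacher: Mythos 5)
Your proof is correct and follows essentially the same route as the paper: in both cases one evaluates the infimum (resp.\ supremum) against its $y=x$ term, which reflexivity together with \textbf{P8} (resp.\ the unit law for $\odot$) identifies as $\bot$ (resp.\ $\top$), and extremality of $\bot$ and $\top$ finishes the argument. Your extra remark that the Theorem guarantees the infimum and supremum exist is a sensible precaution the paper leaves implicit.
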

\begin{proof}
\begin{itemize}
\item[(i)] For every $x\in X$,\\
$\displaystyle\underline{R}(\Downarrow)(x)=\bigwedge_{y\in X}\Big(R(x,y)\rightarrow\bot\Big)\leq R(x,x)\rightarrow \bot= \top\rightarrow\bot=\bot$.
\item[(ii)] For every $x\in X$;\\
$\displaystyle\overline{R}(\Uparrow)(x)=\bigvee_{y\in X}\Big(R(x,y)\odot \top\Big)\geq R(x,x)\odot\top=R(x,x)=\top.$
\end{itemize}
\end{proof}
\begin{prop}
For any $M$-fuzzy approximation space $(X,R)$ the following conditions are equivalent:
\begin{itemize}
\item[(i)] $(X,R)$ is a reflexive fuzzy approximation space;
\item[(ii)] $\underline{R}(f)\leq f$, for every $f\in M^X$;
    \item[(iii)] $f\leq \overline{R}(f)$, for every $f\in M^X$.
\end{itemize}
\end{prop}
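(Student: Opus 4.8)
The plan is to establish the two biconditionals $(i)\Leftrightarrow(ii)$ and $(i)\Leftrightarrow(iii)$ by proving the four implications $(i)\Rightarrow(ii)$, $(ii)\Rightarrow(i)$, $(i)\Rightarrow(iii)$, $(iii)\Rightarrow(i)$. The two implications starting from reflexivity are immediate once one isolates the diagonal term. For $(i)\Rightarrow(ii)$, fix $f\in M^X$ and $x\in X$: the term indexed by $y=x$ in $\underline{R}(f)(x)=\bigwedge_{y\in X}(R(x,y)\rightarrow f(y))$ is $R(x,x)\rightarrow f(x)=\top\rightarrow f(x)=f(x)$ by \textbf{P8}, and since $\underline{R}(f)(x)$ is a lower bound of that family it is $\le f(x)$. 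Dually, for $(i)\Rightarrow(iii)$ the term indexed by $y=x$ in $\overline{R}(f)(x)=\bigvee_{y\in X}(R(x,y)\odot f(y))$ is $\top\odot f(x)=f(x)$ because $\top$ is the monoid unit, and $\overline{R}(f)(x)$ being an upper bound of that family gives $f(x)\le\overline{R}(f)(x)$.

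For $(ii)\Rightarrow(i)$ I would fix $x_0\in X$ and feed the hypothesis the single test set $f:=R_{x_0}$, that is $f(y)=R(x_0,y)$. Then every term $R(x_0,y)\rightarrow f(y)=R(x_0,y)\rightarrow R(x_0,y)$ equals $\top$ by \textbf{P9}, so $\underline{R}(f)(x_0)=\bigwedge_{y\in X}\top=\top$; combined with $\underline{R}(f)\le f$ this forces $\top\le f(x_0)=R(x_0,x_0)$, hence $R(x_0,x_0)=\top$, and $x_0$ was arbitrary. For $(iii)\Rightarrow(i)$ I would instead take, for a fixed $x_0$, the fuzzy point $g\in M^X$ with $g(x_0)=\top$ and $g(y)=\bot$ for $y\neq x_0$. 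Using $\top\odot a=a$ and $a\odot\bot=\bot$ (the latter from \textbf{P1} together with $\bot\le a\odot\bot$), the set $\{R(x_0,y)\odot g(y):y\in X\}$ collapses to $\{R(x_0,x_0),\bot\}$, whose supremum (the least upper bound, realized inside the set) is $R(x_0,x_0)$; thus $\overline{R}(g)(x_0)=R(x_0,x_0)$, and $g\le\overline{R}(g)$ yields $\top=g(x_0)\le R(x_0,x_0)$, so again $R(x_0,x_0)=\top$.

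The main thing to be careful about is the choice of test fuzzy sets, which is where the multilattice setting is less forgiving than the lattice one. For instance, in $(ii)\Rightarrow(i)$ the "obvious" choice $g$ (as above) only delivers $R(x_0,x_0)^*=\bot$, and since $^*$ need not reflect the order on $\textbf{M}$ this does not by itself give $R(x_0,x_0)=\top$; using $f=R_{x_0}$ avoids negation altogether. No extra completeness input is needed: every infimum and supremum written above is a genuine meet or join by the preceding theorem, which guarantees that the relevant $\sqcap$ and $\sqcup$ are singletons.
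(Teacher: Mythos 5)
Your proof is correct and follows essentially the same route as the paper: the diagonal-term estimates for $(i)\Rightarrow(ii)$ and $(i)\Rightarrow(iii)$, the test set $f=R_{x_0}$ for $(ii)\Rightarrow(i)$, and the fuzzy point at $x_0$ for $(iii)\Rightarrow(i)$ are exactly the paper's choices, the only cosmetic difference being that you argue the converses directly where the paper phrases them contrapositively. Your closing remark about why $R_{x_0}$ rather than the fuzzy point must be used in $(ii)\Rightarrow(i)$ is a sound observation, though it is not needed for the proof itself.
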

\begin{proof}
We are going two show that $(i)\Leftrightarrow(ii)$ and $(i)\Leftrightarrow(iii)$.
\begin{itemize}
\item[$(i)\Rightarrow (ii)$] For every $f\in M^X$ and for every $x\in X$,\\
$\displaystyle\underline{R}(f)(x)=\bigwedge_{y\in X}(R(x,y)\rightarrow f(y))\leq R(x,x)\rightarrow f(x)=\top\rightarrow f(x)\stackrel{\text{P8}}{=}f(x)$. Thus, $\underline{R}(f)\leq f$.\\
\item[$(ii)\Rightarrow (i)$ ]Assume that $R$ is not reflexive. It means that $R(x_0, x_0)\lvertneqq \top$, for some $x_0\in X$. Consider $f=R_{x_0}$. Then we have\\
$\displaystyle\underline{R}(f)(x_0)=\bigwedge_{y\in X}(R(x_0,y)\rightarrow f(y))=\bigwedge_{y\in X}(R(x_0,y)\rightarrow R(x_0,y))\stackrel{P9}{=}\top$. However, $f(x_0)=R(x_0,x_0)\lvertneqq \top=\underline{R}(f)(x_0)$, then $\underline{R}(f)\not\leq f$.\\
\item[$(i)\Rightarrow (iii)$] For every $f\in M^X$ and for every $x\in X$ we have,\\
$\displaystyle\overline{R}(f)(x)=\bigvee_{y\in X}\Big(R(x,y)\odot f(y)\Big)\geq R(x,x)\odot f(x)=\top\odot f(x)=f(x)$. Thus, $f\leq \overline{R}(f)$.\\
\item[$(iii)\Rightarrow (i)$] Assume that $R$ is not reflexive. This means that $R(x_0,x_0)\lvertneqq\top$, for some $x_0\in X$. Let $f$ be defined by $f(x)=\top,$ for $x=x_0$ and $f(x)=\bot$ otherwise. We have\\
$\displaystyle\overline{R}(f)(x_0)=\bigvee_{y\in X}\Big(R(x_0,y)\odot f(y)\Big)= R(x_0,x_0)\odot\top=R(x_0,x_0)\lvertneqq \top.$ Thus $f\not\leq \overline{R}(f)$.
\end{itemize}
\end{proof}
Let us now consider, a symmetric $M$-fuzzy approximation space.
\begin{prop}
For any $M$-fuzzy approximation space $(X,R)$ the following conditions are equivalent:
\begin{itemize}
\item[(i)] $(X,R)$ is a symmetric fuzzy approximation space;
\item[(ii)] $\overline{R}(\underline{R}(f))\leq f$, for every $f\in M^X$;
\item[(iii)] $f\leq\underline{R}(\overline{R}(f))$, for every $f\in M^X$.
\end{itemize}
\end{prop}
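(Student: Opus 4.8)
The plan is to establish the cycle of implications (i) $\Rightarrow$ (ii), (ii) $\Rightarrow$ (i), (i) $\Rightarrow$ (iii), (iii) $\Rightarrow$ (i), in the spirit of the previous two propositions, using only the pocrim identities \textbf{P1}--\textbf{P9}, the adjointness condition, and the Theorem above, which guarantees that every $\bigwedge$ and $\bigvee$ appearing in $\underline{R}$ and $\overline{R}$ is a genuine infimum, respectively supremum. Conceptually, symmetry of $R$ makes $\overline{R}$ a left adjoint of $\underline{R}$ on $M^X$, so that (ii) and (iii) are the counit and unit inequalities of that adjunction; but it is cleaner to verify each inequality by hand.

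For (i) $\Rightarrow$ (ii), fix $f\in M^X$ and $x\in X$. Since $\overline{R}(\underline{R}(f))(x)=\bigvee_{z\in X}\big(R(x,z)\odot\underline{R}(f)(z)\big)$ is a supremum, it suffices to show $R(x,z)\odot\underline{R}(f)(z)\le f(x)$ for each $z$. Because $\underline{R}(f)(z)=\bigwedge_{y\in X}(R(z,y)\rightarrow f(y))$ is an infimum, it lies below the term with $y=x$, namely $R(z,x)\rightarrow f(x)$; using symmetry $R(x,z)=R(z,x)$ and then \textbf{P2} in the form $a\odot(a\rightarrow b)\le b$ gives $R(x,z)\odot\underline{R}(f)(z)\le R(z,x)\odot(R(z,x)\rightarrow f(x))\le f(x)$. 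The implication (i) $\Rightarrow$ (iii) is dual: to obtain $f(x)\le\underline{R}(\overline{R}(f))(x)=\bigwedge_{z\in X}(R(x,z)\rightarrow\overline{R}(f)(z))$ it suffices, for each $z$, to check $f(x)\le R(x,z)\rightarrow\overline{R}(f)(z)$, equivalently (adjointness) $R(x,z)\odot f(x)\le\overline{R}(f)(z)$; and the right-hand side, a supremum, dominates its $y=x$ term $R(z,x)\odot f(x)$, which equals $R(x,z)\odot f(x)$ by symmetry.

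For (ii) $\Rightarrow$ (i) the point is to choose the right test fuzzy set. Fix $y_0\in X$ and take $f=R_{y_0}$, i.e.\ $f(y)=R(y_0,y)$. Then $\underline{R}(f)(y_0)=\bigwedge_{y\in X}\big(R(y_0,y)\rightarrow R(y_0,y)\big)=\top$ by \textbf{P9}, so for every $x$ the supremum $\overline{R}(\underline{R}(f))(x)$ dominates its $z=y_0$ term $R(x,y_0)\odot\top=R(x,y_0)$. Applying (ii) at $x$ yields $R(x,y_0)\le f(x)=R(y_0,x)$; as $x,y_0$ are arbitrary, interchanging them gives the reverse inequality, hence $R(x,y_0)=R(y_0,x)$ for all $x,y_0$. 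For (iii) $\Rightarrow$ (i) one argues dually with the crisp singleton: fix $x_0$ and let $f(x_0)=\top$, $f(y)=\bot$ for $y\neq x_0$. From $a\odot\bot=\bot$ (a consequence of \textbf{P1}) one gets $\overline{R}(f)(z)=R(z,x_0)$, hence $\underline{R}(\overline{R}(f))(x_0)=\bigwedge_{y\in X}\big(R(x_0,y)\rightarrow R(y,x_0)\big)$; condition (iii) at $x_0$ forces $\top=f(x_0)\le R(x_0,y)\rightarrow R(y,x_0)$, i.e.\ $R(x_0,y)\le R(y,x_0)$ for every $y$, and interchanging $x_0$ and $y$ concludes.

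I expect the genuine obstacle — the only spot where the multilattice setting departs from the residuated-lattice one — to be the selection of test fuzzy sets in the two converse implications. The naive dual of the (ii) $\Rightarrow$ (i) argument, using the co-singleton $f(x_0)=\bot$ and $f(y)=\top$ elsewhere, yields only $R(x_0,z)\le R(z,x_0)^{**}$, since $a=a^{**}$ may fail in a general residuated multilattice; choosing $f=R_{y_0}$ (respectively the crisp singleton for (iii) $\Rightarrow$ (i)) avoids the double negation. Beyond that, the only routine care needed is to keep appealing to the Theorem so that all the $\bigwedge$'s and $\bigvee$'s may be handled as honest infima and suprema — in particular that a supremum is below $c$ iff each of its terms is, and that it dominates each of its terms.
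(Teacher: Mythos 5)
Your proof is correct, and the forward implications (i)\,$\Rightarrow$\,(ii) and (i)\,$\Rightarrow$\,(iii) match the paper's argument (bound the inner extremum by its $x$-term, use symmetry, then \textbf{P2}). Where you genuinely diverge is in the converse directions: the paper proves them by contraposition, assuming $R(x_0,y_0)\neq R(y_0,x_0)$ and splitting into two cases according to whether $R(x_0,y_0)\leq R(y_0,x_0)$ or not, choosing $f=R_{x_0}$ or $f=R_{y_0}$ (respectively the two crisp singletons) in the two cases and exhibiting a point where the required inequality fails via \textbf{P9}. You instead derive symmetry directly: from (ii) applied to $f=R_{y_0}$ you extract $R(x,y_0)\leq R(y_0,x)$ for all $x,y_0$ and conclude by interchanging the arguments, and dually for (iii) with the crisp singleton. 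Your route uses the same test functions but eliminates the case analysis entirely, which is both shorter and less error-prone; the paper's contrapositive formulation has the minor virtue of pinpointing a concrete witness where monotonicity fails, but buys nothing logically. Two further points in your favour: your observation that the co-singleton test set would only yield $R(x_0,z)\leq R(z,x_0)^{**}$ correctly identifies why the choice of test functions matters when $a=a^{**}$ is unavailable; and your (i)\,$\Rightarrow$\,(iii) is an actual proof of $f\leq\underline{R}(\overline{R}(f))$, whereas the paper's text for that implication merely repeats the computation showing $\overline{R}(\underline{R}(f))\leq f$.
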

\begin{proof}
We are going two show that $(i)\Leftrightarrow(ii)$ and $(i)\Leftrightarrow(iii)$.
\begin{itemize}
\item[$(i)\Rightarrow (ii)$ ]For every $f\in M^X$ and for $x\in X$,\\
\begin{align*}
\overline{R}(\underline{R}(f))(x)&=\bigvee_{y\in X}\Big(R(x,y)\odot\bigwedge_{z\in X}\big(R(y,z)\rightarrow f(z)\big)\Big)\\
                                &\stackrel{P3}{\leq}\bigvee_{y\in Y}\Big(R(x,y)\odot\big(R(y,x)\rightarrow f(x)\big)\Big)\\
                                &=\bigvee_{y\in X}\Big(R(x,y)\odot\big(R(x,y)\rightarrow f(x)\big)\Big), \text{ by symmetry of R}\\
                                &\stackrel{P2}{\leq}f(x).
\end{align*}
Thus, $\overline{R}(\underline{R}(f))\leq f$.\\
\item[$(ii)\Rightarrow (i)$] Assume that $R$ is not symmetric, that is there exist $x_0, y_0\in X$ such that $R(x_0, y_0)\neq R(y_0,x_0)$. Consider the following two cases:\\
\begin{itemize}
\item[\textbf{Case 1:}] Assume $R(x_0,y_0)\leq R(y_0, x_0)$. Since $R(x_0, y_0)\neq R(y_0,x_0)$, we have $R(y_0, x_0)\not\leq R(x_0, y_0)$ then by \textbf{P9}, $R(y_0,x_0)\rightarrow R(x_0,y_0)\lvertneqq\top$.\\
For $f=R_{x_0}$, we have,
\begin{align*}
\Big(\overline{R}\big(\underline{R}(f)\big)(y_0)\Big)\rightarrow f(y_0)&=\Bigg(\bigvee_{x\in X}\bigg(R(y_0,x)\odot\bigwedge_{z\in X}\big(R(x,z)\rightarrow R(x_0,z)\big)\bigg)\Bigg)\rightarrow R(x_0,y_0)\\
&\stackrel{P3}{\leq}\Big(R(y_0,x_0)\odot\bigwedge_{z\in X}\big(R(x_0,z)\rightarrow R(x_0,z)\big)\Big)\rightarrow R(x_0,y_0)\\
&=\Big(R(y_0,x_0)\odot\top\Big)\rightarrow R(x_0, y_0)\\
&\lvertneqq \top
\end{align*}
By $\textbf{P9}$, $\overline{R}\big(\underline{R}(f)\big)(y_0)\not\leq f(y_0)$. Thus, $\overline{R}(\underline{R}(f))\not\leq f$.\\
\item[\textbf{Case 2:}] Let $R(x_0,y_0)\not\leq R(y_0, x_0).$ Then, $R(x_0,y_0)\rightarrow R(y_0, x_0)\lvertneqq \top$, by $\textbf{P9}$. Take $f=R_{y_0}$. Similarly we have,
\begin{align*}
\Big(\overline{R}\big(\underline{R}(f)\big)(x_0)\Big)\rightarrow f(x_0)
&=\Bigg(\bigvee_{y\in Y}\Big(R(x_0,y)\odot\bigwedge_{z\in X}\big(R(y,z)\rightarrow R(y_0,z)\big)\Big)\Bigg)\rightarrow R(y_0,x_0)\\
&\leq\Bigg(\bigg(R(x_0,y_0)\odot\bigwedge_{y\in Y}\big(R(y_0,z)\rightarrow R(y_0,z)\big)\bigg)\Bigg)\rightarrow R(y_0,x_0)\\
&\leq R(x_0,y_0)\rightarrow R(y_0,x_0)\lvertneqq \top.
\end{align*}
 Therefore, $\overline{R}(\underline{R}(f))(x_0)\not\leq f(x_0)$. Again $\overline{R}(\underline{R}(f))\not\leq f$.\\
 \end{itemize}
\item[$(i)\Rightarrow (iii)$] For every $f\in M^X$ and for $x\in X$,
\begin{align*}
\overline{R}\Big(\underline{R}(f)\Big)(x)&=\bigvee_{y\in X}\Big(R(x,y)\odot\bigwedge_{z\in X}\big(R(y,z)\rightarrow f(z)\big)\Big)\\
&\stackrel{P3}{\leq}\bigvee_{y\in X}\Big(R(x,y)\odot\big(R(y,x)\rightarrow f(x)\big)\Big)\\
&=\bigvee_{y\in X}\Big(R(x,y)\odot \big(R(x,y)\rightarrow f(x)\big)\Big), \text{ by the symmetry of R}\\
&\stackrel{P2}{\leq}f(x).
\end{align*}
Thus, $\overline{R}(\underline{R}(f))\leq f$.\\
\item[$(iii)\Rightarrow (i)$] Assume that $R$ is not symmetric, that is there exist $x_0, y_0\in X$ such that $R(x_0, y_0)\neq R(y_0,x_0)$. Consider the following two cases:\\
\begin{itemize}
\item[\textbf{case 1:}] Assume $R(x_0,y_0)\leq R(y_0,x_0)$. Since $R(x_0,y_0)\neq R(y_0,x_0)$, we have $R(y_0,x_0)\not\leq R(x_0,y_0)$ then by \textbf{P9} $R(y_0,x_0)\rightarrow R(x_0,y_0)\lvertneqq\top$.\\
For $f$ defined by $f(x)=\top,$ if $x=y_0$ and $f(x)=\bot$ otherwise. We have,
\begin{align*}
\Big(\underline{R}\big(\overline{R}(f)\big)(y_0)\Big)&=\Bigg(\bigwedge_{x\in X}\bigg(R(y_0,x)\rightarrow\bigvee_{z\in X}\big(R(x,z)\odot f(z)\big)\bigg)\Bigg)\\
&=\Big(\bigwedge_{x\in X}\big(R(y_0,x)\rightarrow R(x,y_0)\big)\Big)\\
&\leq\Big(\big(R(y_0,x_0)\rightarrow R(x_0,y_0)\big)\Big)\lvertneqq \top=f(y_0)\\
\end{align*}
Then, $f\not\leq \underline{R}(\overline{R}(f))$.\\
\item[\textbf{Case 2:}] Let $R(x_0,y_0)\not\leq R(y_0, x_0).$ Then, $R(x_0,y_0)\rightarrow R(y_0, x_0)\lvertneqq \top$, by $\textbf{P9}$. For $f$ defined by $f(x)=\top,$ if $x=x_0$ and $f(x)=\bot$ otherwise. Similarly we have,
\begin{align*}
\Big(\underline{R}\big(\overline{R}(f)\big)(x_0)\Big)&=\Bigg(\bigwedge_{y\in X}\bigg(R(x_0,y)\rightarrow\bigvee_{z\in X}\big(R(y,z)\odot f(z)\big)\bigg)\Bigg)\\
&=\Big(\bigwedge_{y\in X}\big(R(x_0,y)\rightarrow R(y,x_0)\big)\Big)\\
&\leq\Big(\big(R(x_0,y_0)\rightarrow R(y_0,x_0)\big)\Big)\lvertneqq \top=f(x_0)\\
\end{align*}
 Therefore, $f(x_0)\not\leq \underline{R}(\overline{R}(f))(x_0)$. Then,  $f \not\leq \underline{R}(\overline{R}(f))$.
 \end{itemize}
 \end{itemize}
\end{proof}

 Let's look at the case of Euclidean $M$-fuzzy approximation space.
\begin{prop}
 For any $M$-fuzzy approximation space $(X,R)$ the following conditions are equivalent:
\begin{itemize}
\item[(i)] $(X,R)$ is an Euclidean fuzzy approximation space;
\item[(ii)] $\overline{R}(f)\leq\underline{R}(\overline{R}(f))$, for every $f\in M^X$;
    \item[(iii)] $\overline{R}(\underline{R}(f))\leq \underline{R}(f)$, for every $f\in M^X$.
\end{itemize}
\end{prop}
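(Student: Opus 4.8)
The plan is to prove the four implications $(i)\Rightarrow(ii)$, $(ii)\Rightarrow(i)$, $(i)\Rightarrow(iii)$ and $(iii)\Rightarrow(i)$ one at a time. The guiding observation is that, by the theorem which guarantees the meets and joins defining $\underline{R}$ and $\overline{R}$ exist, every $\bigvee$ occurring below is a genuine supremum and every $\bigwedge$ a genuine infimum; so I would reason throughout with the universal properties of $\sup$ and $\inf$ together with the adjointness $a\odot b\leq c\iff a\leq b\to c$, and would never need distributivity of $\odot$ over joins, which on a multilattice holds only as an inclusion.

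For $(i)\Rightarrow(ii)$ I would fix $f\in M^X$ and $x\in X$. Since $\overline{R}(f)(x)$ is the supremum of $\{R(x,y)\odot f(y):y\in X\}$ and $\underline{R}(\overline{R}(f))(x)$ the infimum of $\{R(x,z)\to\overline{R}(f)(z):z\in X\}$, the target $\overline{R}(f)(x)\leq\underline{R}(\overline{R}(f))(x)$ unravels, after two applications of adjointness, into the elementwise claim
\[
R(x,z)\odot R(x,y)\odot f(y)\ \leq\ \overline{R}(f)(z)\qquad\text{for all }y,z\in X .
\]
To settle this I would apply the Euclidean hypothesis with common apex $x$ to get $R(x,z)\odot R(x,y)\leq R(z,y)$, multiply by $f(y)$ using \textbf{P3}, and note that $R(z,y)\odot f(y)$ is one of the terms of the supremum $\overline{R}(f)(z)$. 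The implication $(i)\Rightarrow(iii)$ would go the same way: $\overline{R}(\underline{R}(f))(x)\leq\underline{R}(f)(x)$ reduces, again through the sup/inf universal properties and adjointness, to
\[
R(x,w)\odot R(x,y)\odot\underline{R}(f)(y)\ \leq\ f(w)\qquad\text{for all }y,w\in X ,
\]
and here Euclideanness with apex $x$ (plus commutativity of $\odot$) gives $R(x,w)\odot R(x,y)\leq R(y,w)$, after which $\underline{R}(f)(y)\leq R(y,w)\to f(w)$ together with \textbf{P3} and the cancellation $a\odot(a\to b)\leq b$ of \textbf{P2} finishes it.

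For the two converses I would argue by contraposition, recycling the style of counterexample used for the symmetry proposition. Supposing $R$ is not Euclidean, I would pick $a,b,c\in X$ with $R(c,a)\odot R(c,b)\not\leq R(a,b)$. To defeat $(ii)$ I would take $f$ to be the crisp singleton at $b$ (value $\top$ at $b$, value $\bot$ elsewhere), which makes $\overline{R}(f)(u)=R(u,b)$ for every $u\in X$; then $\overline{R}(f)(c)\leq\underline{R}(\overline{R}(f))(c)$ would, upon isolating the term $z=a$ of the infimum, yield $R(c,b)\leq R(c,a)\to R(a,b)$, i.e.\ $R(c,a)\odot R(c,b)\leq R(a,b)$, contradicting the choice of $a,b,c$. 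To defeat $(iii)$ I would take $f=R_a$, so that $\underline{R}(f)(a)=\bigwedge_{z\in X}\bigl(R(a,z)\to R(a,z)\bigr)=\top$ by \textbf{P9}; the term $y=a$ of the supremum defining $\overline{R}(\underline{R}(f))(c)$ then equals $R(c,a)$, while $\underline{R}(f)(c)\leq R(c,b)\to R(a,b)$, so $(iii)$ would again force $R(c,a)\odot R(c,b)\leq R(a,b)$.

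The step I expect to be the main obstacle is the index bookkeeping rather than anything conceptual: in each appeal to the Euclidean axiom $R(z,x)\odot R(z,y)\leq R(x,y)$ it is the \emph{outer} variable of the expression being bounded that must serve as the common source $z$, and one must silently use commutativity of $\odot$ to line the two factors up with the shape produced by expanding $\overline{R}$ and $\underline{R}$. Past that, the proof is a routine unfolding of definitions and repeated adjointness; nothing multilattice-specific gets in the way, precisely because the suprema and infima involved are all singletons.
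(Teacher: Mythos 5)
Your proof is correct and follows essentially the same route as the paper's: the forward implications rest on the same use of the Euclidean axiom with the evaluation point as the common apex, followed by \textbf{P2}/\textbf{P3}, and the converses use exactly the paper's witnesses (the crisp singleton at $z_0$ against (ii), and $f=R_{y_0}$ against (iii)), merely phrased as a direct contradiction rather than by exhibiting a residuum strictly below $\top$ via \textbf{P9}. The one genuine, if modest, difference lies in the forward directions: where the paper pushes $R(x,y)$ through the supremum and the infimum by invoking Corollary \ref{sec:cor} (items 1 and 5), you reduce each inequality to an elementwise claim using only adjointness and the universal properties of the (singleton) suprema and infima, which sidesteps the multilattice-specific distributivity inclusions entirely. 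Both arguments are sound; yours is slightly more economical in the auxiliary lemmas it needs.
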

\begin{proof}  Let $R$ be a fuzzy relation on a universe $X$.
\begin{description}
\item[$(i)\Rightarrow (ii)$:] We assume that $R$ is a fuzzy Euclidean relation, i.e. $R(x,y)\odot R(x,z)\leq R(y,z)$, for all $x,y,z\in X$. Then for every $f\in M^X$ and for all every $x\in X$
\begin{align*} 
\underline{R}\big(\overline{R}(f)\big)(x)&=\bigwedge_{y\in X}\Big(R(x,y)\rightarrow \bigvee_{z\in X}\big(R(y,z)\odot f(z)\big)\Big)\\
&\geq \bigwedge_{y\in X}\Big(R(x,y)\rightarrow \bigvee_{z\in X}\big(R(x,y)\odot R(x,z)\odot f(z)\big)\Big), \text{ since R is Euclidean}\\
&=\bigwedge_{y\in X}\Bigg(R(x,y)\rightarrow\bigg( R(x,y)\odot\bigvee_{z\in X}\big( R(x,z)\odot f(z)\big)\bigg)\Bigg), \text{ by Corollary \ref{sec:cor} (item 1)}\\
&\stackrel{P2}{\geq}\bigwedge_{y\in X}\bigvee_{z\in X}(R(x,z)\odot f(z)) = \bigvee_{z\in X}(R(x,z)\odot f(z)) =\overline{R}(f)(x).
\end{align*}
Then, $\overline{R}(f)\leq \underline{R}(\overline{R}(f))$.
\item[$(ii)\Rightarrow (i)$:] We assume that $R$ is not Euclidean. Then there are elements $x_0,y_0,z_0\in X$ such that $R(x_0,y_0)\odot R(x_0,z_0)\not\leq R(y_0,z_0)$. By $\textbf{P9}$ we get $R(x_0,y_0)\odot R(x_0,y_0)\rightarrow R(y_0,z_0)\lneqq \top$. 
Define a map $f:X\to M$ by $f(x)=\top$ if $x=z_0$ and $f(x)=\bot$ otherwise. Then $\displaystyle\overline{R}(f)(y)=\bigvee_{z\in X}(R(y,z)\odot f(z))=R(y,z_0)$, for any $y\in X$. Hence,
\begin{align*}
\overline{R}(f)(x_0)\rightarrow \underline{R}(\overline{R}(f))(x_0) 
&=R(x_0,z_0)\rightarrow \bigwedge_{y\in X}\Big(R(x_0,y)\rightarrow \overline{R}(f)(y)\Big)\\
&=R(x_0,z_0)\rightarrow\bigwedge_{y\in X}\Big(R(x_0,y)\rightarrow R(y,z_0)\Big)\\
&\stackrel{P3}{\leq}R(x_0,z_0)\rightarrow \Big(R(x_0,y_0)\rightarrow R(y_0,z_0)\Big)\\
&\stackrel{P4}{=}\Big(R(x_0,z_0)\odot R(x_0,z_0)\Big)\rightarrow R(y_0,z_0) \lneqq\top.
\end{align*}
Then, again by \textbf{P9}, we have $\overline{R}(f)(x_0)\not\leq\underline{R}(\overline{R}(f))(x_0)$, and  $\overline{R}(f)\not\leq\underline{R}(\overline{R}(f))$.
\item[$(i)\Rightarrow (iii)$:] We assume that $R$ is a fuzzy Euclidean relation.
 For every $f\in M^X$ and $x,y,z\in X$, we have
$R(y,z)\rightarrow f(z)\stackrel{P3}{\leq} \Big(R(x,y)\odot R(x,z)\Big)\rightarrow f(z) = R(x,y)\rightarrow(R(x,z)\rightarrow f(z))$,
and 
\begin{align*}
\underline{R}(f)(y)=\bigwedge_{z\in X}\Big(R(y,z)\rightarrow f(z)\Big)
&\leq\bigwedge_{z\in X}\Big(R(x,y)\rightarrow\big(R(x,z)\rightarrow f(z)\big)\Big)\\
&=R(x,y)\rightarrow\bigwedge_{z\in X}\Big(R(x,z)\rightarrow f(z)\Big),\text{ by Corollary \ref{sec:cor} (item 5)}\\
&=R(x,y)\rightarrow \underline{R}(f)(x).
\end{align*}
Thus,
\begin{align*}
\overline{R}(\underline{R}(f))(x) =\bigvee_{y\in X}\Big(R(x,y)\odot\underline{R}(f)(y)\Big)
&\leq \bigvee_{y\in X}\Bigg(R(x,y)\odot \Big(R(x,y)\rightarrow\underline{R}(f)(x)\Big)\Bigg)
\stackrel{P2}{\leq}\underline{R}(f)(x).
\end{align*}
Thus, $\overline{R}(\underline{R}(f))\leq \underline{R}(f)$.
\item[$(iii)\Rightarrow (i)$:] We assume that $R$ is not Euclidean. Then there are elements $x_0,y_0,z_0$ in $X$ such that $R(x_0,y_0)\odot R(x_0,y_0)\rightarrow R(y_0,z_0)\lneqq \top$. 
%
Consider $f=R_{y_0}$. Then, we have
\begin{align*}
\overline{R}(\underline{R}(f))(x_0)\rightarrow \underline{R}(f)(x_0)
&=\Bigg(\bigvee_{y\in X}\Big(R(x_0,y)\odot\bigwedge_{z\in X}\big(R(y,z)\rightarrow R(y_0,z)\big)\Big)\Bigg)\rightarrow \underline{R}(f)(x_0) \\
&=\bigvee_{y\in X}\Bigg(R(x_0,y)\odot\bigwedge_{z\in X}\Big(R(y,z)\rightarrow R(y_0,z)\Big)\Bigg)\rightarrow\underline{R}(f)(x_0)\\
&\leq\Bigg(R(x_0,y_0)\odot\bigwedge_{z\in X}\Big(R(y_0,z)\rightarrow R(y_0,z)\Big)\Bigg)\rightarrow \underline{R}(f)(x_0)\\
&=\Big(R(x_0,y_0)\odot\top\Big)\rightarrow \underline{R}(f)(x_0)\\
&=R(x_0,y_0)\rightarrow \Bigg(\bigwedge_{z\in X}\Big(R(x_0,z)\rightarrow R(y_0,z)\Big)\Bigg)\\
&\leq R(x_0,y_0)\rightarrow \Big(R(x_0,z_0)\rightarrow R(y_0,z_0)\Big)\\
&=\Big(R(x_0,y_0)\odot R(x_0,z_0)\Big)\rightarrow R(y_0,z_0)
\lneqq\top. 
\end{align*}

Thus $\overline{R}(\underline{R}(f))(x_0)\not\leq\underline{R}(f)(x_0)$ and  $\overline{R}(\underline{R}(f))\not\leq\underline{R}(f)$.
\end{description}
\end{proof}
To end this section, let us consider $M$-fuzzy rough sets in approximation space with transitive fuzzy relation.
\begin{prop}
For any $M$-fuzzy approximation space $(X,R)$ the following conditions are equivalent:
\begin{itemize}
\item[(i)] $(X,R)$ is a Transitive fuzzy approximation space;
\item[(ii)] $\underline{R}(f)\leq  \underline{R}(\underline{R}(f))$, for every $f\in M^X$;
\item[(iii)] $\overline{R}(\overline{R}(f))\leq\overline{R}(f)$, for every $f\in M^X$.
\end{itemize}
\end{prop}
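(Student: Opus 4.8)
The plan is to prove the two equivalences $(i)\Leftrightarrow(ii)$ and $(i)\Leftrightarrow(iii)$ separately, in the same spirit as the preceding propositions, with transitivity $R(x,z)\odot R(z,y)\le R(x,y)$ playing the role of the relational hypothesis. For the forward implications I would argue entirely at the level of single terms and push everything through the adjointness condition of the pocrim, so that the only multilattice-specific inputs are that the infima $\underline{R}(f)(x)$ and suprema $\overline{R}(f)(x)$ exist (by the existence theorem above, applied in particular to the $M$-fuzzy sets $\underline{R}(f)$ and $\overline{R}(f)$, which also makes the composites $\underline{R}(\underline{R}(f))$ and $\overline{R}(\overline{R}(f))$ well defined) and that such an infimum (resp. supremum) is the \emph{greatest} lower bound (resp. \emph{least} upper bound) of its set.

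For $(i)\Rightarrow(ii)$, fix $f\in M^X$ and $x,y,z\in X$. From $\underline{R}(f)(x)\le R(x,z)\to f(z)$ and adjointness, $\underline{R}(f)(x)\odot R(x,z)\le f(z)$; combining this with transitivity $R(x,y)\odot R(y,z)\le R(x,z)$, \textbf{P3} and associativity of $\odot$ gives $\underline{R}(f)(x)\odot R(x,y)\odot R(y,z)\le f(z)$, hence $\underline{R}(f)(x)\odot R(x,y)\le R(y,z)\to f(z)$ by adjointness. As $z$ was arbitrary, $\underline{R}(f)(x)\odot R(x,y)$ is a lower bound of $\{R(y,z)\to f(z)\mid z\in X\}$, so $\underline{R}(f)(x)\odot R(x,y)\le\underline{R}(f)(y)$; by adjointness once more $\underline{R}(f)(x)\le R(x,y)\to\underline{R}(f)(y)$, and since $y$ was arbitrary $\underline{R}(f)(x)\le\bigwedge_{y}\big(R(x,y)\to\underline{R}(f)(y)\big)=\underline{R}(\underline{R}(f))(x)$. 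The implication $(i)\Rightarrow(iii)$ is dual: from $R(x,z)\odot f(z)\le\overline{R}(f)(x)$, transitivity, \textbf{P3} and associativity we get $R(x,y)\odot R(y,z)\odot f(z)\le\overline{R}(f)(x)$, so $R(y,z)\odot f(z)\le R(x,y)\to\overline{R}(f)(x)$ by adjointness; as $z$ was arbitrary, $\overline{R}(f)(y)=\bigvee_{z}\big(R(y,z)\odot f(z)\big)\le R(x,y)\to\overline{R}(f)(x)$, i.e. $R(x,y)\odot\overline{R}(f)(y)\le\overline{R}(f)(x)$, and taking the supremum over $y$ yields $\overline{R}(\overline{R}(f))(x)\le\overline{R}(f)(x)$.

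For the converses I would use contraposition, with witnesses chosen as in the Euclidean case. Assume $R$ is not transitive and fix $x_0,y_0,z_0$ with $R(x_0,z_0)\odot R(z_0,y_0)\not\le R(x_0,y_0)$; equivalently, by \textbf{P9} and \textbf{P4}, $R(x_0,z_0)\to\big(R(z_0,y_0)\to R(x_0,y_0)\big)\lneqq\top$. To defeat (ii), take $f:=R_{x_0}$: then $\underline{R}(f)(x_0)=\bigwedge_{w}\big(R(x_0,w)\to R(x_0,w)\big)=\top$ by \textbf{P9}, whereas $\underline{R}(\underline{R}(f))(x_0)\le R(x_0,z_0)\to\underline{R}(f)(z_0)\le R(x_0,z_0)\to\big(R(z_0,y_0)\to f(y_0)\big)=R(x_0,z_0)\to\big(R(z_0,y_0)\to R(x_0,y_0)\big)\lneqq\top$; since $\top\not\le$ an element distinct from $\top$, this gives $\underline{R}(f)\not\le\underline{R}(\underline{R}(f))$. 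To defeat (iii), take $f$ to be the indicator of $\{y_0\}$ (value $\top$ at $y_0$ and $\bot$ elsewhere): then $\overline{R}(f)(w)=R(w,y_0)$ for every $w\in X$, hence $\overline{R}(\overline{R}(f))(x_0)\ge R(x_0,z_0)\odot\overline{R}(f)(z_0)=R(x_0,z_0)\odot R(z_0,y_0)\not\le R(x_0,y_0)=\overline{R}(f)(x_0)$, and therefore $\overline{R}(\overline{R}(f))\not\le\overline{R}(f)$.

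I do not expect a genuine obstacle: in contrast to the symmetry and Euclidean arguments, this one needs neither the infinitary distributive identities of Corollary~\ref{sec:cor} nor a case split, only adjointness, \textbf{P2}--\textbf{P4}, \textbf{P9} and transitivity. The two places that deserve a careful line are the order-theoretic bookkeeping that turns term inequalities into non-comparabilities (from $a\le b$ with $b\lneqq\top$ conclude $a\ne\top$, and from $a\ge b$ with $b\not\le c$ conclude $a\not\le c$) and the remark that passing from a pointwise bound to the infimum or supremum is legitimate precisely because, by the existence theorem, those infima and suprema are singletons and hence actual greatest-lower/least-upper bounds.
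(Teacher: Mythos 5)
Your proof is correct and follows the same overall strategy as the paper: the same decomposition into $(i)\Leftrightarrow(ii)$ and $(i)\Leftrightarrow(iii)$, contraposition for the two converses, and the same witnesses ($f=R_{x_0}$ to defeat (ii), the indicator of a single point to defeat (iii)). The one genuine difference is in the forward implications. The paper first flattens the nested infimum (resp.\ supremum) into a doubly indexed one using Corollary~\ref{sec:cor} (item 5, resp.\ item 1) together with \textbf{P4}, and only then applies transitivity and \textbf{P3} termwise; you never touch the nested expression at all, arguing pointwise via adjointness and the universal property of $\underline{R}(f)(y)$ and $\overline{R}(f)(y)$ as greatest lower and least upper bounds (e.g.\ showing $\underline{R}(f)(x)\odot R(x,y)$ is a lower bound of $\{R(y,z)\to f(z)\mid z\in X\}$ and hence lies below $\underline{R}(f)(y)$). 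Your route thus avoids the infinitary identities of the Corollary entirely, needing only the existence theorem to guarantee that these infima and suprema are genuine ones --- a slightly more economical use of the multilattice machinery --- whereas the paper's route is more computational but arrives at the same inequalities. Your converse arguments coincide with the paper's up to relabelling of the witnesses, and your closing order-theoretic remarks (from $a\le b\lneqq\top$ conclude $a\ne\top$; from $a\ge b\not\le c$ conclude $a\not\le c$) are exactly the steps the paper performs implicitly via \textbf{P9}.
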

\begin{proof} 
We are going two show that $(i)\Leftrightarrow(ii)$ and $(i)\Leftrightarrow(iii)$.
\begin{description}
\item[$(i)\Rightarrow (ii)$:] We assume that $R$ is transitive. Let $f\in M^X$ and  $x\in X$.
\begin{align*}
\underline{R}(\underline{R}(f))(x)&=\bigwedge_{y\in X}\Bigg(R(x,y)\rightarrow \bigwedge_{z\in X}\Big(R(y,z)\rightarrow f(z)\Big)\Bigg)\\
&=\bigwedge_{y\in X}\bigwedge_{z\in X}\Bigg(R(x,y)\rightarrow\Big(R(y,z)\rightarrow f(z)\Big)\Bigg) \text{ by Corollary \ref{sec:cor} (item 5)}\\
&\stackrel{P9}{=}\bigwedge_{y\in X}\bigwedge_{z\in X}(R(x,y)\odot R(y,z))\rightarrow f(z)\\
&\geq \bigwedge_{z\in X}\Big(R(x,z)\rightarrow f(z)\Big) \text{since R is  transitive}\\
&=\underline{R}(f)(x).
\end{align*}
Thus, $\underline{R}(f)\leq \underline{R}(\underline{R}(f))$.
\item[$(ii)\Rightarrow (i)$:]  Assume that $R$ is not transitive. It follows that for some $x_0,y_0\in X$, $\displaystyle\bigvee_{z\in X}(R(x_0,z)\odot R(z,y_0))\not\leq R(x_0,y_0)$.  Then, by \textbf{(P9)}, \Bigg($\displaystyle\bigvee_{z\in X}\Big(R(x_0,z)\odot R(z, y_0)\Big)\Bigg)\rightarrow R(x_0,y_0)\lneqq \top$, for some $x_0,y_0\in X$. We set $f=R_{x_0}$. Note that 
$\displaystyle\underline{R}(f)(x_0)=\bigwedge_{z\in X}(R(x_0,z)\rightarrow R(x_0,z))=\top$.
Therefore, 
\begin{align*}
\underline{R}(f)(x_0)\rightarrow\underline{R}(\underline{R}(f))(x_0)
&=\top\rightarrow\Bigg(\bigwedge_{z\in X}\Big(R(x_0,z)\rightarrow\bigwedge_{y\in X}\big(R(z,y)\rightarrow R(x_0,y)\big)\Big)\Bigg)\\
&=\bigwedge_{z\in X}\Big(R(x_0,z)\rightarrow \bigwedge_{y\in X}\big(R(z,y)\rightarrow R(x_0,y)\big)\Big)\\
&=\bigwedge_{z\in X}\bigwedge_{y\in X}\Big(R(x_0,z)\rightarrow \big(R(z,y)\rightarrow R(x_0,y)\big)\Big)\\
&=\bigwedge_{z\in X}\bigwedge_{y\in X}\Big(\big(R(x_0,z)\odot R(z,y)\big)\rightarrow R(x_0,y)\Big)\\
&\leq\bigwedge_{z\in X}\Bigg(\bigvee_{y\in X}\Big(R(x_0,z)\odot R(z,y)\Big)\rightarrow R(x_0,y)\Bigg)\\
&\leq \Bigg(\bigvee_{y\in X}\Big(R(x_0,z)\odot R(z,y_0)\Big)\Bigg)\rightarrow R(x_0,y_0)\lneqq\top,
\end{align*}
Thus $\underline{R}(f)(x_0)\rightarrow \underline{R}(\underline{R}(f))(x_0)\lneqq \top$, i.e. $\underline{R}(f)(x_0)\not\leq\underline{R}(\underline{R}(f))(x_0)$, and $\underline{R}(f)\not\leq\underline{R}(\underline{R}(f))$.
\item[$(i)\Rightarrow (iii)$:] Let $f\in M^X$ and $x\in X$.
\begin{align*}
\overline{R}(\overline{R}(f))(x)&=\bigvee_{y\in X}(R(x,y)\odot \bigvee_{z\in X}(R(y,z)\odot f(z)))\\
&=\bigvee_{y\in X}\bigvee_{z\in X}(R(x,y)\odot R(y,z)\odot f(z)),\text{ by Corollary \ref{sec:cor} (item 1)}\\
&\stackrel{P3}{\leq}\bigvee_{y\in X}\bigvee_{z\in X}(R(x,z)\odot f(z)) \textup{ by transitivity of } R\\
&=\overline{R}(f)(x).
\end{align*}
Thus, $\overline{R}(\overline{R}(f))\leq \overline{R}(f)$.
\item[$(iii)\Rightarrow (i)$:]
We assume that $R$ is not transitive. Then
 $\displaystyle\bigvee_{y\in X}(R(x_0,y)\odot R(y,z_0))\not \leq R(x_0,z_0)$ for some $x_0,z_0\in X$. By \textbf{(P9)}, $\displaystyle \bigvee_{y\in X}(R(x_0,y)\odot R(y,z_0))\rightarrow R(x_0,z_0)\lneqq\top$.
Define a map $f$ by $f(z_0)=\top$, and $f(x)=\bot$ if $x\neq z_0$. We have $\displaystyle\overline{R}(f)(x_0)=\bigvee_{z\in X}\Big(R(x_0,z)\odot f(z)\Big)=R(x_0,z_0)$.
Therefore,
\begin{align*}
\overline{R}(\overline{R}(f))(x_0)\rightarrow \overline{R}(f)(x_0)
&=\Bigg(\bigvee_{y\in X}\Big(R(x_0,y)\odot\bigvee_{z\in X}\big(R(y,z)\odot f(z)\big)\Big)\Bigg)\rightarrow R(x_0,z_0)\\
&=\Bigg(\bigvee_{y\in X}\bigvee_{z\in X}\Big(R(x_0,y)\odot \big(R(y,z)\odot f(z)\big)\Big)\Bigg)\rightarrow R(x_0,z_0)\\
&=\Bigg(\bigvee_{y\in X}\Big(\bigvee_{z\in X}\big(R(x_0,y)\odot R(y,z)\big)\odot f(z)\Big)\Bigg)\rightarrow R(x_0,z_0)\\
&=\Bigg(\bigvee_{y\in X}\Big(R(x_0,y)\odot R(y,z_0)\Big)\Bigg)\rightarrow R(x_0,z_0)
\ \lneqq\ \top,
\end{align*}
and $\overline{R}(\overline{R}(f))(x_0)\not\leq\overline{R}(f)(x_0)$. Thus, $\overline{R}(\overline{R}(f))\not\leq\overline{R}(f)$.
\end{description}
\end{proof}
\begin{theorem} Let $(X, R)$  be an $M$-fuzzy approximation space, and  $f\in M^X$ be a fuzzy subset. 
\begin{itemize}
\item[(i)] If $R$ is  an $M$-tolerance relation then 
\[
\underline{R}(\underline{R}(f))=\underline{R}(f) \le f \le \overline{R}(f) = \overline{R}(\overline{R}(f)).\]
\item[(ii)] If $R$ is an $M$-equivalence relation then
\[\overline{R}(\underline{R}(f)) = \underline{R}(\underline{R}(f)) = \underline{R}(f) \le f \le \overline{R}(f)  = \overline{R}(\overline{R}(f)) =  \underline{R}(\overline{R}(f)).\]
\end{itemize}
\end{theorem}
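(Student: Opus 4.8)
The plan is purely compositional: I would assemble both chains from the characterisations already proved for reflexive, symmetric, Euclidean and transitive $M$-fuzzy approximation spaces, together with the monotonicity of $\underline{R}$ and $\overline{R}$. I use that an $M$-tolerance relation is reflexive and transitive, and that an $M$-equivalence relation is reflexive, symmetric and transitive --- hence also Euclidean, since by symmetry and transitivity $R(z,x)\odot R(z,y)=R(x,z)\odot R(z,y)\leq R(x,y)$ for all $x,y,z\in X$.

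For part (i), fix $f\in M^X$. The reflexivity proposition immediately yields the two middle relations $\underline{R}(f)\leq f$ and $f\leq\overline{R}(f)$. Applying the transitivity proposition gives $\underline{R}(f)\leq\underline{R}(\underline{R}(f))$ and $\overline{R}(\overline{R}(f))\leq\overline{R}(f)$, and the reverse inequalities are obtained by re-applying the reflexivity proposition to the $M$-fuzzy sets $\underline{R}(f)$ and $\overline{R}(f)$ themselves (or, equivalently, by applying monotonicity of $\underline{R}$ to $\underline{R}(f)\leq f$ and of $\overline{R}$ to $f\leq\overline{R}(f)$). Assembling these gives $\underline{R}(\underline{R}(f))=\underline{R}(f)\leq f\leq\overline{R}(f)=\overline{R}(\overline{R}(f))$.

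For part (ii), an $M$-equivalence relation is in particular an $M$-tolerance relation, so part (i) already supplies the central portion $\underline{R}(\underline{R}(f))=\underline{R}(f)\leq f\leq\overline{R}(f)=\overline{R}(\overline{R}(f))$; it remains only to slot in $\overline{R}(\underline{R}(f))$ and $\underline{R}(\overline{R}(f))$. Since the relation is Euclidean, the Euclidean proposition gives $\overline{R}(\underline{R}(f))\leq\underline{R}(f)$ and $\overline{R}(f)\leq\underline{R}(\overline{R}(f))$, while re-applying the reflexivity proposition to $\underline{R}(f)$ and to $\overline{R}(f)$ gives $\underline{R}(f)\leq\overline{R}(\underline{R}(f))$ and $\underline{R}(\overline{R}(f))\leq\overline{R}(f)$. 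Hence $\overline{R}(\underline{R}(f))=\underline{R}(f)$ and $\underline{R}(\overline{R}(f))=\overline{R}(f)$, and combining with part (i) yields the full chain.

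I do not expect a genuine obstacle here: the whole argument reduces to the characterisation propositions established earlier, so the proof is a matter of correct bookkeeping. The two points I would state explicitly are that the universally quantified hypotheses of those propositions may legitimately be instantiated at $\underline{R}(f)$ and $\overline{R}(f)$ --- which is sound because $\underline{R}$ and $\overline{R}$ map $M^X$ into $M^X$, the relevant infima and suprema being singletons by the earlier theorem --- and the small verification that an $M$-equivalence relation is Euclidean, which is what lets part (ii) reuse the Euclidean proposition.
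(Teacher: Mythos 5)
Your proof is correct and, in substance, the same compositional argument the paper gives: part (i) is handled identically (the reflexivity proposition for the two middle inequalities, the transitivity proposition plus monotonicity for the two outer equalities). The one place you diverge is in part (ii): to obtain $\overline{R}(\underline{R}(f))\leq\underline{R}(f)$ and $\overline{R}(f)\leq\underline{R}(\overline{R}(f))$ you first observe that symmetry and transitivity imply the Euclidean property and then invoke the Euclidean proposition, whereas the paper argues directly, e.g.\ $\overline{R}(\underline{R}(f))\leq\overline{R}(\underline{R}(\underline{R}(f)))\leq\underline{R}(f)$, using monotonicity of $\overline{R}$ on the transitivity inequality and then the symmetry proposition applied to the fuzzy set $\underline{R}(f)$. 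Both routes are equally short and sound; yours has the small advantage of making the implication ``$M$-equivalence $\Rightarrow$ Euclidean'' explicit and reusable. One caveat you were right to surface: part (i) genuinely requires transitivity --- $\underline{R}(\underline{R}(f))=\underline{R}(f)$ already fails for crisp reflexive, symmetric, non-transitive relations --- and the paper never defines ``$M$-tolerance'' yet invokes transitivity in its own proof of (i). So your reading of $M$-tolerance as reflexive and transitive is the one the statement needs, even though the standard convention (reflexive and symmetric, as in Radzikowska--Kerre) would make (i) false as stated; this is a defect of the theorem's formulation rather than of your proof.
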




\begin{proof}
\begin{itemize}
\item[$(i)$]
For every $f\in M^X$. We have by Transitivity of $R$, $\overline{R}(\overline{R}(f))\leq\overline{R}(f)$. Since $R$ is reflexive, we have, $\overline{R}(f)\leq \overline{R}(\overline{R}(f))$. Hence $\overline{R}(\overline{R}(f))=\overline{R}(f)$.
The second equality can be proved in the analogous way.

\item[$(ii)$] For every $f\in M^X$, we have by the transitivity of $R$, $\underline{R}(f)\leq\underline{R}(\underline{R}(f))$.
Next, by the reflexivity of $R$ we have $\underline{R}(f)\leq\overline{R}(\underline{R}(f))$.
Then, $\overline{R}(\underline{R}(f))\leq\overline{R}(\underline{R}(\underline{R}(f))).$
By the symmetry of $R$ we have $\overline{R}(\underline{R}(\underline{R}(f)))\leq\underline{R}(f)$. Then $\overline{R}(\underline{R}(f))\leq\underline{R}(f)$. 
Therefore, $\underline{R}(f)=\overline{R}(\underline{R}(f))$.
By the same manner we can prove the second equality.
\end{itemize}
\end{proof}
\section{Conclusion}\label{sec:conclusion}
In this paper, we extended Anna Maria Radzikowska work's \cite{RK2004} by generalizing the notion of fuzzy rough sets with an arbitrary residuated multilattice as the set of truth values.
We defined the fuzzy approximation space and prove that the defining lower and upper $M$-fuzzy rough approximators coincide with the lower and upper $L$-fuzzy rough approximators defined by Anna Maria Radzikowska \cite{RK2004}.
Since we established the fuzzy rough set with a residuated multilattice as underlying set of truth values, it will be also interesting to use the residuated multilattice as underlying set of truth values in Rough Fuzzy Concept Analysis.

\addcontentsline{toc}{chapter}{BIBLIOGRAPHIE}


\end{document}